\documentclass[12pt]{amsart}

\usepackage{MnSymbol}
\usepackage{graphicx}

\newtheorem{theorem}{Theorem}[section]
\newtheorem{lemma}[theorem]{Lemma}

\newtheorem{corollary}[theorem]{Corollary}

\theoremstyle{definition}

\newtheorem{example}[theorem]{Example}

\theoremstyle{remark}
\newtheorem{remark}[theorem]{Remark}

\newcommand{\rr}{{\mathbb R}}
\newcommand{\ganz}{{\mathbb Z}}
\newcommand{\nat}{{\mathbb N}}
\newcommand{\complex}{{\mathbb C}}
\newcommand{\AR}{\operatorname{AR(1)}}
\newcommand{\eps}{{\varepsilon}}
\newcommand{\Pm}{{\mathbb P}}
\newcommand{\Qm}{{\mathbb Q}}

\numberwithin{equation}{section}

\makeatletter
\@namedef{subjclassname@2020}{\textup{2020} Mathematics Subject Classification}
\makeatother

\title[Max-semistable extremal behavior of AR(1)-processes]{Max-semistable extremal behavior of AR(1)-processes connected with Bernoulli convolutions}

\author[P. Kern]{Peter Kern}
\address{Peter Kern, Mathematical Institute, Heinrich Heine University D\"usseldorf, Universitätsstra{\ss}e 1, 40225 D\"usseldorf, Germany} \email{kern@hhu.de}

\author[A.E. Sterk]{Alef E. Sterk}
\address{Alef E. Sterk, Bernoulli Institute for Mathematics, Computer Science and Artificial Intelligence, University of Groningen, PO Box 407, 9700 AK Groningen, The Netherlands} \email{a.e.sterk@rug.nl}

\date{\today}

\begin{document}

\allowdisplaybreaks

\begin{abstract}
We consider simple autoregressive processes of type $\AR$, whose stationary distribution is supported on a subset of the unit interval and is an affine transformation of a Bernoulli convolution. A new structural representation of the stationary distribution as a product of a power function with a log-periodic function near the origin is given, which gives structural insight to the stationary distribution on the whole unit interval by using a characteristic functional equation. This enables to prove that the stationary distribution of the $\AR$-process belongs to the domain of geometric partial attraction of a max-semistable law. We further prove uniform convergence of the distribution function of normalized maxima of the $\AR$-process to a certain power of the max-semistable law in the spirit of a merge theorem and point out connections to deterministic and random dynamical systems.
\end{abstract}

\keywords{Extreme value theory, max-semistable laws, autoregressive process, Bernoulli convolution, merge theorem, dynamical systems}
\subjclass[2020]{Primary 60F05, 60G70; Secondary 37A50, 37M10, 60G10, 60G30.}

\maketitle

\section{Introduction}

We consider autoregressive processes $(X_{k})_{k\in\nat_{0}}$ with parameter $\beta\in(0,1)$ of the form
\begin{equation}\label{ar1}
X_{k+1}=\beta X_{k}+\eps_{k+1},\quad k\in\nat_{0},
\end{equation}
where $X_{0}$, $(\eps_{k})_{k\in\nat}$ are independent random variables and for all $k\in\nat$
\begin{equation*}
\Pm(\eps_{k}=0)=p\quad , \quad \Pm(\eps_{k}=1-\beta)=q
\end{equation*}
for some $p\in(0,1)$ and $q=1-p$. The paper \cite{Sterk} investigates the extremal behavior for $\beta\in(0,\frac12]$ and one of our aims is to study to what extent the results of \cite{Sterk} generalize to parameters $\beta\in(0,1)$.

It is well-known that the $\AR$ process \eqref{ar1} admits a unique strictly stationary solution for all $\beta\in(0,1)$ which can be represented as 
\begin{equation}\label{statsol}
X_{k}=\sum_{j=0}^{\infty}\beta^{j}\eps_{k-j},\quad k\in\nat_{0},
\end{equation}
when extending the i.i.d.\ noise sequence to $(\eps_{k})_{k\in\ganz}$; see \cite{BroDav} or \cite{GolMal} for details. 
Clearly, the support of the stationary distribution is contained in $[0,1]$. For $\beta\in(0,\frac12)$ the support is a Cantor-like set as shown in \cite{Sterk} and coincides with the classical ternary Cantor set for $\beta=\frac13$ as also shown in Example 2.5.11 of \cite{BDM}.

For $j\in\nat_{0}$ we define $Z_{j}:=2(1-\beta)^{-1}\eps_{1-j}-1$, then
$$\Pm(Z_{j}=-1)=p\quad , \quad \Pm(Z_{j}=1)=q$$
and by \eqref{statsol} we obtain
\begin{equation}\label{berconv}
\sum_{j=0}^{\infty}\beta^{j}Z_{j}=\sum_{j=0}^{\infty}\beta^{j}\left(\frac{2\eps_{1-j}}{1-\beta}-1\right)=\frac1{1-\beta}(2X_{1}-1).
\end{equation}
The left-hand side of \eqref{berconv} is called a Bernoulli convolution and is an affine transformation of the stationary solution \eqref{statsol} of the $\AR$ process. There exists an extensive literature on Bernoulli convolutions, for an overview we refer to the monograph \cite{BSS}, the survey articles \cite{PSS, Varju} and Section 2.5 in \cite{DiaFre}. It is known by \cite{JesWin} that the distribution of a Bernoulli convolution \eqref{berconv} is either continuous singular or absolutely continuous with respect to Lebesgue measure depending on the parameters, i.e.\ there is no discrete part and the mixing case cannot appear. The support of a Bernoulli convolution is contained in $[-(1-\beta)^{-1},(1-\beta)^{-1}]$ and in the symmetric case $p=q=\frac12$ the following is known:
\begin{itemize}
\item For $\beta\in(0,\frac12)$ the distribution is singular, for $\beta=\frac12$ it is uniformly distributed and thus absolutely continuous. If $\beta\in(\frac12,1)$
is the reciprocal of a Pisot number, then the distribution is singular. These results date back to \cite{KerWin,Erdoes}.
\item A celebrated result in \cite{Sol1} with a simplified proof in \cite{PerSol1} shows that for Lebesgue almost every $\beta\in(\frac12,1)$ the distribution of a Bernoulli convolution is absolutely continuous.
\end{itemize}
A Pisot number $1/\beta\in(1,2)$ is an algebraic integer, i.e.\ the root of a polynomial with integer coefficients and leading coefficient $1$, such that all other roots of the polynomial belong to the open unit disc in $\complex$. 
The smallest limit point of Pisot numbers is the golden ratio $\frac1{\beta}=\frac{1+\sqrt{5}}{2}\approx 1.618033$ belonging to $\beta\approx 0.618034$ which is a root of the polynomial $x^2-x-1$ and the only quadratic Pisot number in $(1,2)$. Examples of algebraic integers in $(1,2]$ that are not Pisot numbers are $\frac1{\beta}=2^{1/k}$ for $k\in\mathbb N$ (root of $x^{k}-2$), the roots of $x^3-2x-2$, or $x^{n+m}-x^n-2$ for $n,m\in\nat$ with $\max\{n,m\}\geq2$. These are examples of Garsia numbers, i.e.\ algebraic integers in $(1,2)$ whose minimal polynomial have constant term $\pm2$ and all roots lie outside the closed unit disc in $\mathbb C$. Garsia numbers $\frac1{\beta}$ always lead to absolutely continuous Bernoulli convolutions with bounded or even continuous Lebesgue density as shown in \cite{Gar} and \cite{Yu}.
It is still an open question, whether there exist exceptional $\beta\in(\frac12,1)$ other than reciprocals of Pisot numbers such that the distribution of the corresponding Bernoulli convolution is singular. Some illustrative numerical examples are given in \cite{Sol2}. 

In the asymmetric case $p\not=q$ similar results hold. In \cite{PerSol2} it is shown that the distribution of a Bernoulli convolution is singular for all $\beta\in(0,p^{p}(1-p)^{1-p})$ and absolutely continuous for almost every $\beta\in(p^{p}(1-p)^{1-p},1)$, where the reciprocals of Pisot numbers in $(1,p^{-p}q^{-q})$ still provide examples of exceptional parameters with singular distribution; see also \cite{PerSch} for details. Note that $f(p):=p^{p}(1-p)^{1-p}$ attains its minimal value for $p=\frac12$ with $f(\frac12)=\frac12$ and $f(p)\to1$ as $p\downarrow0$ or $p\uparrow 1$. Only recently specific examples of absolutely continuous Bernoulli convolutions in the asymmetric case were given in \cite{Varju2}, in particular all Garsia numbers sufficiently close to 1 have this property.

In view of \eqref{berconv} all these results on Bernoulli convolutions directly transfer to the corresponding stationary distribution \eqref{statsol} of the $\AR$. In our study we cannot contribute to the open question on characterizing the parameters that lead to absolute continuity or singularity, but rather give a structural result for the stationary distribution function in Section 2 that enables us to prove that for i.i.d.\ sequences it belongs to the domain of geometric partial attraction of a max-semistable law also for $\beta\in(\frac12,1)$, which has already been shown for the case $\beta\in(0,\frac12]$ in \cite{Sterk}.

A distribution function $F$ belongs to the domain of geometric partial attraction if for an i.i.d.\ sequence $(X_n)_{n\in\nat}$ distributed as $F$ and the partial maxima $M_n^\circ=\max\{X_1,\ldots,X_n\}$ we have as $n\to\infty$
\begin{equation}\label{maxdogpa}
\Pm\big(a_n(M_{k(n)}^\circ-b_n)\leq x\big)=\big(F(a_n^{-1}x+b_n)\big)^{k(n)}\to H(x)
\end{equation}
for all continuity points of a non-degenerate distribution function $H$, a subsequence $(k(n))_{n\in\nat}$ of positive integers such that  $\frac{k(n+1)}{k(n)}\to c\geq1$, and certain normalizations $a_n>0$, $b_n\in\rr$. The limit distribution function $H$ is called max-semistable and the normalizations can be chosen in such a way that $H$ is of one of the three forms
\begin{align}
H(x) & =\exp\big(-\exp(-x)\,\nu(x)\big),\qquad x\in\rr,\label{Gumbeltype}\\
H(x) & =\exp\big(-x^{-\alpha}\,\nu(\log x)\big),\qquad x>0,\\
H(x) & =\exp\big(-(-x)^{\alpha}\,\nu(\log(-x))\big),\qquad x<0,\label{Weibulltype}
\end{align}
where $\alpha>0$ and $\nu$ is a positive, bounded and $\log(c^{1/\alpha})$-periodic function for $c>1$ with $\alpha=1$ in case of \eqref{Gumbeltype}; see \cite{Gri1}, \cite{Gri2}, or the alternative expositions in \cite{CHT}, \cite{Megyesi}, \cite{Pan} for details.

In case $c=1$ we can choose $k(n)=n$ and $\nu\equiv a$ as a positive constant function so that $F$ belongs to the well-known domain of attraction of a max-stable distribution function $H$; e.g.\ see \cite{Gal}, \cite{HF}, or \cite{Res}. In our paper we will only deal with $c>1$ and a max-semistable distribution function of Weibull-type \eqref{Weibulltype}.

Our limit result \eqref{maxdogpa} for $\beta\in(\frac12,1)$ in Section 2 only holds along the geometrically increasing subsequence $k(n)=\lfloor q^{-n}\rfloor$ and can be refined by a merge theorem \cite{Megyesi} for the full parameter range $\beta\in(0,1)$ in Section 3, which states that along the sequence of all natural numbers the distribution function of appropriately normalized maxima is uniformly close to a certain power of the limiting max-semistable distribution function. This extremal behavior for i.i.d.\ sequences is mimicked by the $\AR$ sequence with an extremal index $p$ in addition as we will show in our main result in Section 4. Since our interest lies in a general merge result, we give a direct proof based on techniques in \cite{Cher} and \cite{GMS}, rather than applying general techniques for stationary sequences in \cite{LLR} that were generalized to the max-semistable situation in \cite{TemCan}. We conclude our study showing the appearance of the stationary distribution function in certain deterministic and random dynamical systems in Section 5 for which the extremal behavior is of great importance.

\section{The stationary distribution function}

For $k\in\nat_0$ let $F_k$ denote the distribution function of $X_k$, then by \eqref{ar1} we have
\begin{equation}\label{dfrecur}\begin{split}
F_{k+1}(x) & =\Pm(X_{k+1}\leq x)=\Pm(\beta X_{k}+\eps_{k+1}\leq x)\\
&=\Pm(\beta X_{k}+\eps_{k+1}\leq x\mid \eps_{k+1}=0)\cdot p+\Pm(\beta X_{k}+\eps_{k+1}\leq x\mid \eps_{k+1}=1-\beta)\cdot q\\
&=p\cdot\Pm(X_k\leq\tfrac{x}{\beta})+q\cdot\Pm(X_k\leq\tfrac{x-1+\beta}{\beta})\\
&=p\cdot F_k(\tfrac{x}{\beta})+q\cdot F_k(\tfrac{x}{\beta}+1-\tfrac1{\beta}).
\end{split}\end{equation}
This is equation (6) in \cite{Sterk}. It follows that the distribution function $F_{\beta,p}$ of the stationary $\AR$ process \eqref{statsol} fulfills equation (7) in \cite{Sterk}:
\begin{equation}\label{dfstatsol}
F_{\beta,p}(x) =p\cdot F_{\beta,p}(\tfrac{x}{\beta})+q\cdot F_{\beta,p}(\tfrac{x}{\beta}+1-\tfrac1{\beta}).
\end{equation}
Note that $F_{\beta,p}$ is continuous on $\rr$, since the distribution of a Bernoulli convolution is either continuous singular or absolutely continuous. Moreover, the support of the stationary distribution is contained in $[0,1]$ and thus we have 
$$F_{\beta,p}(x)=\begin{cases}
1 & \quad\text{ for all }x\geq 1,\\
0 & \quad\text{ for all }x\leq 0,
\end{cases}$$
as stated in Lemma 2.2 of \cite{Sterk}. This leads to uniqueness of $F_{\beta,p}$ as a solution of \eqref{dfstatsol} that has been shown in Lemma 2.3 of \cite{Sterk} for the case $\beta\in(0,\frac12]$. Uniqueness in the case $\beta\in(\frac12,1)$ needs a different proof which works for the whole parameter region $\beta\in(0,1)$ as follows.

\begin{lemma}\label{uniqueness}
Let $F$ be a continuous distribution function {on $\rr$} with support contained in $[0,1]$ such that for fixed $p\in(0,1)$, $\beta\in(0,1)$ we have
\begin{equation}\label{functionaleq}
\displaystyle F(x) =p\cdot F(\tfrac{x}{\beta})+q\cdot F(\tfrac{x}{\beta}+1-\tfrac1{\beta})\quad\text{ for all }x\in\rr.
\end{equation} 
Then $F=F_{\beta,p}$ is the unique stationary distribution function of the $\AR$.
\end{lemma}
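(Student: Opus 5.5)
The functional equation \eqref{functionaleq} says exactly that $F$ is a fixed point of the distributional map $\mu\mapsto\mathcal L(\beta Y+\eps)$, where $Y\sim\mu$ and $\eps$ is independent of $Y$ with $\Pm(\eps=0)=p$, $\Pm(\eps=1-\beta)=q$. The plan is to show that this map is a strict contraction in a suitable metric on distributions supported in $[0,1]$; then it has exactly one fixed point, and that fixed point is $F_{\beta,p}$. Note that continuity of $F$ is not needed for this argument; only the support condition matters.

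First we check that the map is well defined on the right class. The recursion \eqref{dfrecur} shows that if $Y$ has distribution function $G$, then $\beta Y+\eps$ has distribution function
\[
(TG)(x)=p\,G(\tfrac{x}{\beta})+q\,G(\tfrac{x}{\beta}+1-\tfrac1\beta).
\]
If $G$ is supported in $[0,1]$, then $\beta Y+\eps\in[0,\beta]\cup[1-\beta,1]\subset[0,1]$, so $T$ maps this class into itself. By hypothesis $TF=F$, and by \eqref{dfstatsol} also $TF_{\beta,p}=F_{\beta,p}$.

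For the contraction estimate we use the $L^1$-distance of distribution functions,
\[
d(G,H)=\int_{\rr}|G(x)-H(x)|\,dx ,
\]
which equals the Wasserstein-1 distance and is finite for distributions supported in $[0,1]$. Substituting $y=x/\beta$ in the first term and $y=x/\beta+1-1/\beta$ in the second gives
\[
d(TG,TH)\le p\int|G(x/\beta)-H(x/\beta)|\,dx+q\int|G(y)-H(y)|\,\beta\,dy=\beta\, d(G,H).
\]
Alternatively, one can couple $Y\sim G$ and $Y'\sim H$ optimally and use the same $\eps$ for both; then $|\beta Y+\eps-(\beta Y'+\eps)|=\beta|Y-Y'|$, which gives the same bound. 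Applying this to $F$ and $F_{\beta,p}$ yields $d(F,F_{\beta,p})\le\beta\,d(F,F_{\beta,p})$. Since $\beta<1$, we get $d(F,F_{\beta,p})=0$, so $F=F_{\beta,p}$ almost everywhere. Both functions are right-continuous, so they agree everywhere.

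A pointwise route is also possible. Iterating the equation $n$ times writes $F(x)$ as an average over the $2^n$ words of values of $F$ at arguments obtained by inverting the affine maps $y\mapsto\beta y$ and $y\mapsto\beta y+1-\beta$. This gives $|F-F_{\beta,p}|$ bounded by the probability that $X_0$ lies in a set of diameter $\beta^n$; for that one would use the continuity of $F$. The $L^1$ route avoids these details.

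The only real obstacle is the case $\beta>\tfrac12$. There the images $[0,\beta]$ and $[1-\beta,1]$ overlap, so one cannot separate the two branches of the equation and argue by self-similarity on disjoint pieces, which is presumably what the argument for $\beta\le\tfrac12$ in \cite{Sterk} relies on. The metric contraction argument does not depend on disjointness, which is why it covers all $\beta\in(0,1)$. The remaining care is to confirm that $d$ is finite and is a genuine metric on this class, and both follow from the compact support in $[0,1]$.
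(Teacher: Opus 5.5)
Your proof is correct, but it takes a different route from the paper's.

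\textbf{What the paper does.} The paper works in the sup norm and argues by contradiction. For two solutions $F,G$ it uses continuity together with $F(0)=G(0)=0$ to get $|F-G|\le\tfrac12\|F-G\|_\infty$ on some interval $[0,x_0]$. It then iterates the equation $k_0$ times with $\beta^{k_0}\le x_0$. On $[\beta^{k_0},1]$ the pure-$p$ branch $p^{k_0}F(x/\beta^{k_0})$ equals $p^{k_0}$ for both functions and cancels, so $|F-G|\le(1-p^{k_0})\|F-G\|_\infty$ there. Combining the two regions gives the factor $\max\{\tfrac12,1-p^{k_0}\}<1$.

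\textbf{Why the paper needs this detour.} For $\beta>\tfrac12$ the map $T$ is not a strict contraction in the sup norm. For instance, the images of the point masses at $0$ and $1$ differ by $1$ on $[1-\beta,\beta)$. As a result, the paper's contraction factor depends on the particular pair $(F,G)$.

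\textbf{What your metric buys.} The $L^1$/Wasserstein-1 distance sees the Lipschitz constant $\beta$ of the two affine maps directly. Overlap of the images $[0,\beta]$ and $[1-\beta,1]$ is irrelevant, and one step suffices. Continuity is never used, so you get uniqueness among all probability laws on $[0,1]$, atoms included. You also get an explicit geometric rate, and, by completeness of $W_1$ on laws on $[0,1]$, existence and convergence $T^nG\to F_{\beta,p}$ as well.

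\textbf{What the paper's version buys.} It stays with elementary sup-norm estimates on distribution functions, in keeping with the rest of the article. In exchange it relies on the continuity hypothesis, which is imported from Jessen--Wintner.

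A cosmetic point: in your displayed estimate the first integral should also be transformed by the substitution $y=x/\beta$. The bound then reads $p\beta\,d(G,H)+q\beta\,d(G,H)=\beta\,d(G,H)$.
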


\begin{proof}
Assume that $F,G$ are two distribution functions fulfilling the conditions such that $\|F-G\|_\infty>0$. Since $F$ and $G$ are continuous with $F(0)=0=G(0)$ we can choose $x_0>0$ such that for all $x\in[0,x_0]$ we have $F(x)\leq\frac12\,\|F-G\|_\infty$ and $G(x)\leq\frac12\,\|F-G\|_\infty$. It follows that
\begin{equation}\label{partialunique}
|F(x)-G(x)|\leq\tfrac12\,\|F-G\|_\infty\quad\text{ for all }x\in[0,x_0].
\end{equation}
Further choose $k_0\in\nat$ such that $\beta^{k_0}\leq x_0$. Iterating \eqref{functionaleq} we can write
$$\displaystyle F(x)=p^{k_0}F(\tfrac{x}{\beta^{k_0}})+\sum_{m=1}^{k_0}p^{k_0-m}q^m\sum_{\ell=1}^{k_0 \choose m} F(h_{\ell,m}(x)),$$
where each $h_{\ell,m}$ is an affine transformation on $\rr$ not depending on $F$. Since $F(\tfrac{x}{\beta^{k_0}})=1$ and $G(\tfrac{x}{\beta^{k_0}})=1$ for all $x\geq\beta^{k_0}$, it follows that for $x\in[\beta^{k_0},1]$ we get
\begin{align*}
|F(x)-G(x)| & \leq\sum_{m=1}^{k_0}p^{k_0-m}q^m\sum_{\ell=1}^{k_0 \choose m} \big|F(h_{\ell,m}(x))-G(h_{\ell,m}(x))\big|\\
& \leq\sum_{m=1}^{k_0}{k_0 \choose m}p^{k_0-m}q^m\|F-G\|_\infty=(1-p^{k_0})\|F-G\|_\infty.
\end{align*}
Together with \eqref{partialunique} we conclude
$$\|F-G\|_\infty\leq\max\{\tfrac12,1-p^{k_0}\}\,\|F-G\|_\infty$$which contradicts our assumption $\|F-G\|_\infty>0$.
\end{proof}

Interchanging the roles of $p$ and $q=1-p$ we get as in Lemma 2.1 of \cite{Sterk}
\begin{equation}\label{dfreverse}
F_{\beta,q}(x) =1-F_{\beta,p}(1-x)\quad\text{for all }x\in\rr.
\end{equation}
In case $\beta\in(0,\frac12]$ it is proven in Proposition 2.1 of \cite{Sterk} that there exists a positive, bounded and $\log(1/\beta)$-periodic function $\nu_{\beta,p}:(-\infty,0)\to(0,\infty)$ such that 
$$F_{\beta,p}(x)=x^{\log p/\log\beta}\,\nu_{\beta,p}(\log x)\quad\text{ for all }x\in(0,1).$$
Additionally, $\nu_{\beta,p}$ is continuous, since $F_{\beta,p}$ is. This result partially extends to $\beta\in(\frac12,1)$ as follows.

\begin{lemma}\label{repstatdistr}
For $\beta\in(\frac12,1)$ and $y\in(0,\frac1{\beta}-1]$ we may write
\begin{equation}\label{repstatleft}
F_{\beta,p}(y)=y^{\log p/\log\beta}\,\nu_{\beta,p}(\log y),
\end{equation}
where $\nu_{\beta,p}:(-\infty,\log(\frac1{\beta}-1)]\to(0,\infty)$ is a continuous, $\log(1/\beta)$-periodic functon.
\end{lemma}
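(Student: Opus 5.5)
The plan is to read off \eqref{repstatleft} directly from the functional equation \eqref{dfstatsol}, using that one of its two terms vanishes near the origin. Put $a:=\log p/\log\beta>0$. In \eqref{dfstatsol} the second argument satisfies $\tfrac{x}{\beta}+1-\tfrac1{\beta}\le 0$ exactly when $x\le 1-\beta$. Since $F_{\beta,p}$ vanishes on $(-\infty,0]$, this gives
\begin{equation*}
F_{\beta,p}(x)=p\,F_{\beta,p}(\tfrac{x}{\beta})\quad\text{for all }x\le 1-\beta .
\end{equation*}
Substituting $x=\beta y$, this reads $F_{\beta,p}(\beta y)=p\,F_{\beta,p}(y)$ for all $y\le\tfrac1{\beta}-1$. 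This explains the range $(0,\tfrac1\beta-1]$ in the statement: for $\beta\in(\tfrac12,1)$ we have $\tfrac1\beta-1<1$, so the scaling identity only holds below $1$. For $\beta\le\tfrac12$ the same identity holds on all of $(0,1)$, which is why Proposition 2.1 of \cite{Sterk} covers the whole interval there.

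Next I define, for $t\le\log(\tfrac1\beta-1)$,
\begin{equation*}
\nu_{\beta,p}(t):=e^{-at}F_{\beta,p}(e^t),
\end{equation*}
so that \eqref{repstatleft} holds by construction. Continuity of $\nu_{\beta,p}$ follows at once from the continuity of $F_{\beta,p}$, which holds because Bernoulli convolutions have no atoms.

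For periodicity, note that $\beta^{a}=p$. For $t\le\log(\tfrac1\beta-1)$ the point $t+\log\beta$ also lies in the domain, and the scaling identity gives
\begin{equation*}
\nu_{\beta,p}(t+\log\beta)=e^{-at}\beta^{-a}F_{\beta,p}(\beta e^t)=e^{-at}p^{-1}\,p\,F_{\beta,p}(e^t)=\nu_{\beta,p}(t).
\end{equation*}
This is exactly $\log(1/\beta)$-periodicity on a half-line that is closed under shifts by $\log\beta$. Consequently $\nu_{\beta,p}$ is determined by its values on the compact period interval $[\log(\tfrac1\beta-1)+\log\beta,\ \log(\tfrac1\beta-1)]$. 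In particular it is bounded there by continuity, and hence bounded everywhere.

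The only step that needs an actual argument, and so the main obstacle, is strict positivity, i.e.\ $F_{\beta,p}(y)>0$ for every $y>0$. I will use the representation \eqref{statsol}. Choose $k$ with $\beta^k\le y$. On the event $\{\eps_{0}=\eps_{-1}=\dots=\eps_{1-k}=0\}$ we have
\begin{equation*}
X_0=\sum_{j\ge k}\beta^j\eps_{-j}\le(1-\beta)\sum_{j\ge k}\beta^j=\beta^k\le y .
\end{equation*}
By independence this event has probability $p^k$, so $F_{\beta,p}(y)\ge p^k>0$. Therefore $\nu_{\beta,p}>0$. Positivity is attained on the compact period interval by continuity and hence bounded away from zero, which completes the plan.
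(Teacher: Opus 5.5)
Your proposal is correct and follows the paper's argument: define $\nu_{\beta,p}(t)=e^{-at}F_{\beta,p}(e^t)$, then use that the second term in \eqref{dfstatsol} vanishes for $y\le\frac1\beta-1$, which gives $\nu_{\beta,p}(t+\log\beta)=\nu_{\beta,p}(t)$. Your explicit bound $F_{\beta,p}(y)\ge p^k>0$ proves the positivity of $\nu_{\beta,p}$, which the paper asserts without argument.
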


\begin{proof}
For $y=e^x\leq\frac1{\beta}-1$ we get by \eqref{repstatleft} using \eqref{dfstatsol}
\begin{align*}
\nu_{\beta,p}(x-\log(1/\beta)) & =\nu_{\beta,p}(x+\log\beta) = e^{-(x+\log\beta)\log p/\log\beta}\,F_{\beta,p}(e^{x+\log\beta})\\
& = (e^x)^{-\log p/\log\beta}\,\frac1{p}\,F_{\beta,p}(\beta e^x)\\
& = (e^x)^{-\log p/\log\beta}\left(F_{\beta,p}(e^x)+\frac{q}{p}\,F_{\beta,p}(e^x+1-\tfrac1{\beta})\right)\\
& = (e^x)^{-\log p/\log\beta}F_{\beta,p}(e^x)=\nu_{\beta,p}(x),
\end{align*}
since $e^x+1-\frac1{\beta}\leq0$.
\end{proof}

\begin{figure}
\includegraphics{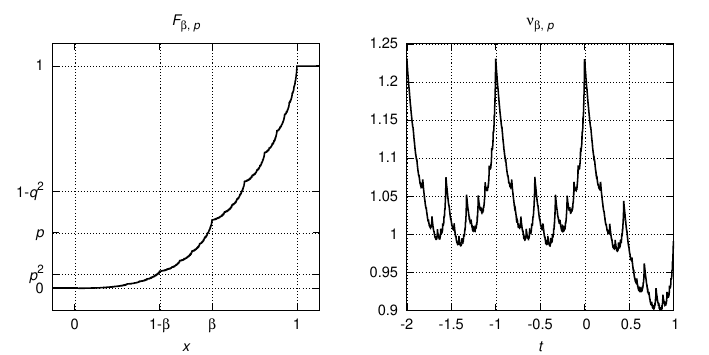}
\includegraphics{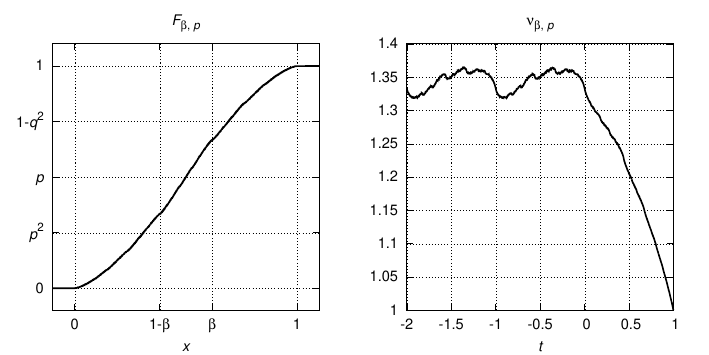}
\includegraphics{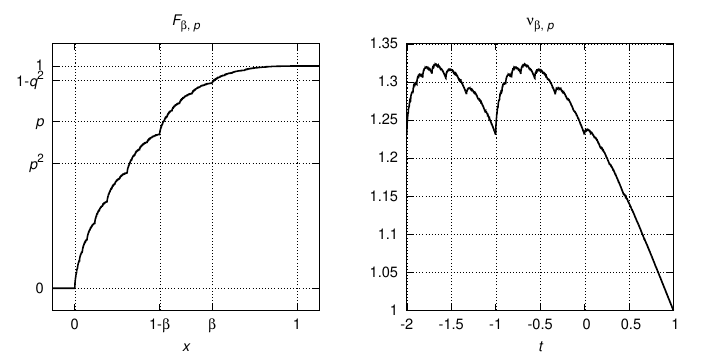}
\caption{\small Numerical approximations of the functions $F_{\beta,p}$ and $\nu_{\beta,p}$ for $\beta = 2/(1+\sqrt{5})$ and $p\in\{1/4,1/2,3/4\}$ (top to bottom). Note that the function $\nu_{\beta,p}$ is plotted against the variable $t$, where $x = (\frac{1}{\beta}-1)(\frac{1}{\beta})^t$, in order to visualize its periodicity. For $t \in [-2,0]$ two periods can be seen,  which is in agreement with Lemma \ref{repstatdistr}. However, for $t \in (0,1]$, corresponding to $x \in (\frac{1}{\beta}-1,1]$, the periodicity of $\nu_{\beta,p}$ disappears. This suggests that the representation of $F_{\beta,p}$ in Lemma \ref{repstatdistr} in general does not extend beyond the interval $(0,\frac{1}{\beta}-1]$.}
\label{fig:goldenratio}
\end{figure}

Note that the numerical approximations in Figure 1 indicate that the periodicity of $\nu_{\beta,p}$ on $(-\infty,\log(\frac1\beta-1)]$ in general cannot be extended beyond $\log(\frac1\beta-1)$. However, the representation \eqref{repstatleft} can be used to characterize $F_{\beta,p}$ in terms of $\nu_{\beta,p}$ for the full domain $y\in(0,1]$ by the following methods.\\

{\bf (A) Extension method.}
Rewriting \eqref{dfstatsol} as 
\begin{equation}\label{dfstatsolalt}
F_{\beta,p}(y) =\frac1{p}\,F_{\beta,p}(\beta y)-\frac{q}{p}\,F_{\beta,p}(y+1-\tfrac1{\beta})
\end{equation}
we may extend \eqref{repstatleft} by Lemma \ref{repstatdistr} as long as $\beta y$ and $y+1-\tfrac1{\beta}$ both belong to $(0,\frac1{\beta}-1]$, i.e.\ for $y\in(\frac1{\beta}-1,\frac1{\beta}(\frac1{\beta}-1)]$. In case $\frac1{\beta}(\frac1{\beta}-1)\geq 1$ we are done with the extension. Note that the critical case $\frac1{\beta}(\frac1{\beta}-1)=1$ leads to the golden ratio $\frac1{\beta}=\frac{1+\sqrt{5}}{2}$ which is a Pisot number. In Figure 1 we numerically illustrate that for $\frac1{\beta}=\frac{1+\sqrt{5}}{2}$ and various values of $p$ we have indeed singular behavior. If $\frac1{\beta}(\frac1{\beta}-1)>1$, we know that by Lemma \ref{repstatdistr} and \eqref{dfstatsolalt} for $y\in[1,\frac1{\beta}(\frac1{\beta}-1)]$ we have an additional functional equation
\begin{equation}\label{extright}\begin{split}
1 & = F_{\beta,p}(y) =\frac1{p}\,F_{\beta,p}(\beta y)-\frac{q}{p}\,F_{\beta,p}(y+1-\tfrac1{\beta})\\
& = y^{\log p/\log\beta}\,\nu_{\beta,p}(\log(\beta y))-\frac{q}{p}\,(y+1-\tfrac1{\beta})^{\log p/\log\beta}\,\nu_{\beta,p}(\log(y+1-\tfrac1{\beta})).
\end{split}\end{equation}
In case $\frac1{\beta}(\frac1{\beta}-1)<1$ we can again use \eqref{dfstatsolalt} to extend the representation as long as $\beta y$ and $y+1-\tfrac1{\beta}$ both belong to $(0,\frac1{\beta}(\frac1{\beta}-1)]$, i.e.\ for
$$y\in(0,\tfrac1{\beta^2}(\tfrac1{\beta}-1)]\cap(\tfrac1{\beta}-1,(\tfrac1{\beta}+1)(\tfrac1{\beta}-1) ]=(\tfrac1{\beta}-1,\tfrac1{\beta^2}(\tfrac1{\beta}-1)],$$
since $\frac1{\beta^2}\leq\frac1{\beta}+1$. Again, in case $\frac1{\beta^2}(\frac1{\beta}-1)\geq 1$ we are done with the extension and the critical case $\frac1{\beta^2}(\frac1{\beta}-1)=1$, i.e.\ $(\frac1{\beta})^3-(\frac1{\beta})^2-1=0$ is again a Pisot number. If $\frac1{\beta^2}(\frac1{\beta}-1)>1$, we know that for $y\in[1,\frac1{\beta^2}(\frac1{\beta}-1)]$ we have
\begin{equation}\label{step2}
1 = F_{\beta,p}(y) =\frac1{p}\,F_{\beta,p}(\beta y)-\frac{q}{p}\,F_{\beta,p}(y+1-\tfrac1{\beta}),
\end{equation}
but we have to distinguish between the two cases, where $\beta y$, $y+1-\tfrac1{\beta}$ belong to $(0,\frac1{\beta}-1]$ or $(\frac1{\beta}-1,\frac1{\beta}(\frac1{\beta}-1)]$ in order to apply Lemma \ref{repstatdistr} or \eqref{extright} which again leads to certain functional equations. \\
In case $\frac1{\beta^2}(\frac1{\beta}-1)<1$ we may iterate the procedure as long as $\beta y$ and $y+1-\tfrac1{\beta}$ both belong to $(0,\frac1{\beta^{2}}(\frac1{\beta}-1)]$, i.e.\ for
$$y\in(0,\tfrac1{\beta^3}(\tfrac1{\beta}-1)]\cap(\tfrac1{\beta}-1,(\tfrac1{\beta^{2}}+1)(\tfrac1{\beta}-1) ]=(\tfrac1{\beta}-1,\tfrac1{\beta^3}(\tfrac1{\beta}-1)]$$
since $\frac1{\beta^3}\leq\frac1{\beta^{2}}+1$. Thus the next critical case $\frac1{\beta^3}(\frac1{\beta}-1)=1$, i.e.\ $(\frac1{\beta})^4-(\frac1{\beta})^3-1=0$, is again a Pisot number, but successive iterations cannot always lead to critical Pisot numbers, since there is a smallest Pisot number $\frac1\beta\approx 1.324718$ as a solution of $(\frac1{\beta})^3-\frac1{\beta}-1=0$.\\
In principle, for every $\beta\in(\frac12,1)$ by a finite number of these extension steps it is possible to fully characterize $F_{\beta,p}$ on $(0,1)$ in terms of $\nu_{\beta,p}\circ\log$ on  $(0,\frac1\beta-1]$.\\

{\bf (B) Reflection method.}
Instead of property \eqref{dfstatsol} we may also use property \eqref{dfreverse} for an extension to $y\in(\frac1{\beta}-1,1]$. As long as $1-y\leq\frac1{\beta}-1$, i.e.\ $y\geq 2-\frac1{\beta}$ by \eqref{dfreverse} we have
\begin{equation}\label{extreverse}
F_{\beta,p}(y) = 1-F_{\beta,q}(1-y)=1-(1-y)^{\log q/\log\beta}\,\nu_{\beta,q}(\log(1-y)).
\end{equation}
In case $2-\frac1{\beta}\leq \frac1{\beta}-1$, i.e.\ $\beta\leq\frac23$ there is no gap and in the overlap $y\in[2-\frac1{\beta},\frac1{\beta}-1]$ both representations \eqref{repstatleft} and \eqref{extreverse} are valid, leading to
$$1-y^{\log p/\log\beta}\,\nu_{\beta,p}(\log y)=(1-y)^{\log q/\log\beta}\,\nu_{\beta,q}(\log(1-y)).$$
In case $\beta\geq\frac23$ the gap can be filled by the extension method (A), possibly combined with the reflection method.\\

As a conclusion we may say that the extension method indicates the appearance of Pisot numbers, but the above methods seem not to be suitable to get an explicit representation of $\nu_{\beta,p}$, respectively $F_{\beta,p}$. However, as for the case $\beta\in(0,\frac12]$ in \cite{Sterk}, Lemma \ref{repstatdistr} is sufficient to derive limit theorems for extremes, since the extremal behavior of an i.i.d.\ sequence distributed as $F_{\beta,p}$ is governed by the behavior of $F_{\beta,p}(x)$ as $x\uparrow1$ and hence, using \eqref{dfreverse}, by the behavior of $F_{\beta,q}(x)$ as $x\downarrow0$, which in turn is determined by the log-periodic function $\nu_{\beta,q}$ by Lemma \ref{repstatdistr}. To be more precise, let $(Y_{k})_{k\in\nat_{0}}$ be an i.i.d.\ sequence of random variables with common distribution function $F_{\beta,p}$ and define
$M_{n}^{\circ}=\max\{Y_{0},\ldots,Y_{n-1}\}$. Using an extension of the function $\nu_{\beta,q}$ from Lemma \ref{repstatdistr} to a periodic function on $\rr$, denoted by  $\nu_{\beta,q}^{\to}$, we get:

\begin{theorem}\label{maxsdoa}
For $\beta\in(\frac12,1)$ and $n\in\nat$ let $a_{n}=\beta^{-n}$, $b_{n}=1$ and $k(n)=\lfloor q^{-n}\rfloor$, then for all $x<0$ we have as $n\to\infty$
$$\Pm\left(a_{n}(M_{k(n)}^{\circ}-b_{n})\leq x\right)\to\exp\left(-(-x)^{\log q/\log\beta}\nu_{\beta,q}^{\to}(\log(-x))\right).$$
This shows that the stationary distribution $F_{\beta,p}$ of the $\AR$ belongs to the domain of geometric partial attraction of a max-semistable distribution with parameters $\alpha=\log q/\log\beta>0$ and $c=q^{-1}>1$.
\end{theorem}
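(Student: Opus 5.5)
We compute the distribution function of the normalized maxima directly, using that the sample is i.i.d.:
$$\Pm\left(a_n(M_{k(n)}^\circ-b_n)\le x\right)=F_{\beta,p}(1+\beta^n x)^{k(n)}.$$
We then express the tail $1-F_{\beta,p}(1+\beta^n x)$ through the reflection identity \eqref{dfreverse} and the log-periodic representation of Lemma \ref{repstatdistr} applied with $q$ in place of $p$. Fix $x<0$. For all $n$ large enough that $y_n:=-\beta^n x=\beta^n(-x)\in(0,\frac1\beta-1]$, \eqref{dfreverse} gives
$$1-F_{\beta,p}(1+\beta^n x)=F_{\beta,q}(\beta^n(-x)).$$
Lemma \ref{repstatdistr} (with $p$ and $q$ interchanged) then yields
$$1-F_{\beta,p}(1+\beta^n x)=\big(\beta^n(-x)\big)^{\log q/\log\beta}\,\nu_{\beta,q}\big(n\log\beta+\log(-x)\big).$$
Since $\beta^{n\log q/\log\beta}=q^n$ and $\nu_{\beta,q}^{\to}$ is $\log(1/\beta)$-periodic, this equals
$$q^n\,(-x)^{\alpha}\,\nu_{\beta,q}^{\to}(\log(-x)),\qquad \alpha=\log q/\log\beta.$$
Here we use that the periodic extension coincides with $\nu_{\beta,q}$ on its original domain $(-\infty,\log(\frac1\beta-1)]$, so the shift by $n\log\beta$ may be removed.

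Next write $\phi(x):=(-x)^\alpha\nu_{\beta,q}^{\to}(\log(-x))$. This is a fixed finite positive number, because $\nu_{\beta,q}$ is continuous and periodic on a half-line and hence bounded with bounded extension. We therefore have
$$F_{\beta,p}(1+\beta^nx)^{k(n)}=\big(1-q^n\phi(x)\big)^{\lfloor q^{-n}\rfloor}.$$
Since $q^n\phi(x)\to0$ and $\lfloor q^{-n}\rfloor q^n\to1$, the expansion $\log(1-u)=-u+O(u^2)$ gives
$$\lfloor q^{-n}\rfloor\log\big(1-q^n\phi(x)\big)\to-\phi(x),$$
which is the claimed limit.

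It remains to check the structural claims. The exponent is $\alpha=\log q/\log\beta>0$ because both logarithms are negative. We have $k(n+1)/k(n)\to q^{-1}=c>1$. The function $\nu_{\beta,q}^{\to}$ is positive, bounded and $\log(1/\beta)$-periodic, and $\log(1/\beta)=\log(c^{1/\alpha})$ because $c^{1/\alpha}=q^{-\log\beta/\log q}=\beta^{-1}$. The limit is thus of the Weibull-type form \eqref{Weibulltype}. It is non-degenerate because $\phi$ is positive, finite, and tends to $0$ as $x\uparrow0$ and to $\infty$ as $x\to-\infty$, the latter since $\nu$ is bounded below by a positive constant (its minimum over one period).

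The only delicate point is making sure the periodic extension is legitimately used. We need $n$ so large that $\beta^n(-x)\le\frac1\beta-1$, and we need that shifting the argument by multiples of the period stays consistent with the original $\nu_{\beta,q}$. Both follow from Lemma \ref{repstatdistr} with $x$ fixed, so I expect no real obstacle beyond this bookkeeping.
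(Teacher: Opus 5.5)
Your proposal is correct and follows essentially the same route as the paper's proof. Both arguments reflect via \eqref{dfreverse}, apply Lemma \ref{repstatdistr} to $F_{\beta,q}$ near $0$ once $\beta^n(-x)\le\frac1\beta-1$, remove the shift $n\log\beta$ by the periodicity of $\nu_{\beta,q}^{\to}$, and use $\lfloor q^{-n}\rfloor q^n\to1$. Your explicit $\log(1-u)=-u+O(u^2)$ step and your check of $\alpha$, $c$ and non-degeneracy simply spell out what the paper leaves as ``$\sim$'' and ``This shows''.
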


\begin{proof}
In our particular situation it is much easier to give a direct proof than to show general conditions for being in a domain of geometric partial attraction given in \cite{Gri2}, \cite{Megyesi}, or \cite{CHT}.
Since $a_{n}^{-1}=\beta^{n}\to0$, for fixed $x<0$ choose $n$ large enough such that $-a_{n}^{-1}x\leq\frac1{\beta}-1$. Then by \eqref{dfreverse} and Lemma \ref{repstatdistr} we have
\begin{align*}
\Pm\left(a_{n}(M_{k(n)}^{\circ}-b_{n})\leq x\right) & =\Pm(Y_{0}\leq a_{n}^{-1}x+1)^{k(n)}\\
& =\left(1-F_{\beta,q}(-a_{n}^{-1}x)\right)^{k(n)}\\
& =\left(1-(-a_{n}^{-1}x)^{\log q/\log\beta}\nu_{\beta,q}(\log(-a_{n}^{-1}x))\right)^{k(n)}\\
& =\exp\left(k(n)\log\left(1-(-a_{n}^{-1}x)^{\log q/\log\beta}\nu_{\beta,q}(\log(-a_{n}^{-1}x))\right)\right)\\
& \sim\exp\left(-k(n)(-a_{n}^{-1}x)^{\log q/\log\beta}\nu_{\beta,q}(\log(-a_{n}^{-1}x))\right).
\end{align*}
Since by periodicity we have
$$\nu_{\beta,q}(\log(-a_{n}^{-1}x))=\nu_{\beta,q}(\log(-x)+n\log\beta)=\nu_{\beta,q}^{\to}(\log(-x))$$
and
$k(n)(a_{n}^{-1})^{\log q/\log\beta}=\lfloor q^{-n}\rfloor(\beta^{n})^{\log q/\log\beta}=\lfloor q^{-n}\rfloor q^{n}\to1,$
the assertion follows.
\end{proof}

\begin{remark}\label{rem}
Note that the normalizing sequence fulfills $a_{n}\sim d_{k(n)}$ with $d_{n}=n^{\log \beta/\log q}$, which is a regularly varying sequence of index $1/\alpha$. Further note that Theorem \ref{maxsdoa} does not exclude a max-stable limit with a constant function $\nu_{\beta,q}$. At least for the symmetric case $p=q=\frac12$ and $\beta=2^{-1/2}$ there is some (numerical) evidence in \cite{Sol2} of a max-stable limit. The following examples illustrate the occurrence of a constant function $\nu_{2^{-1/2},2^{-1}}$.
\end{remark}

\begin{example}\label{exsymm}
We consider the case $p=q=\frac12$ and $\beta=2^{-1/2}$ in \cite{Sol2} and for simplicity write $F=F_{2^{-1/2},2^{-1}}$. Note that $\frac1{\beta}=\sqrt{2}$ is a Garsia number. Assume that $\nu_{2^{-1/2},2^{-1}}\equiv a$ for some $a>0$, then $F(x)=a\,x^2$ for all $x\in(0,\sqrt{2}-1]$ by Lemma \ref{repstatdistr}. All we have to show is that the extension method leads to a continuous distribution function on $(0,1]$ that fulfills \eqref{dfstatsol} from which we receive $a$ by $F(1)=1$.\\
\underline{Step 1:} For $x\in(\sqrt{2}-1,\sqrt{2}(\sqrt{2}-1)]$, where $\sqrt{2}(\sqrt{2}-1)\approx 0.58<1$, we get by \eqref{dfstatsolalt}
\begin{align*}
F(x) & =2\,F(\tfrac{x}{\sqrt{2}})-F(x+1-\sqrt{2})\\
& =a\left(2\,(\tfrac{x}{\sqrt{2}})^2-(x+1-\sqrt{2})^2\right)\\
& =a\left(2(\sqrt{2}-1)x-(\sqrt{2}-1)^2\right).
\end{align*}
\underline{Step 2:} For $x\in(\sqrt{2}(\sqrt{2}-1),2(\sqrt{2}-1)]$, where $2(\sqrt{2}-1)\approx 0.828<1$, we have $x+1-\sqrt{2}\leq\sqrt{2}-1$ and hence we get by \eqref{dfstatsolalt}
\begin{align*}
F(x) & =2\,F(\tfrac{x}{\sqrt{2}})-F(x+1-\sqrt{2})\\
& =a\left(2\,(\sqrt{2}(\sqrt{2}-1)x-(\sqrt{2}-1)^2)-(x+1-\sqrt{2})^2\right)\\
& =a\left(2x-x^2-3(\sqrt{2}-1)^2\right).
\end{align*}
\underline{Step 3:} Since $\sqrt{2}^3(\sqrt{2}-1)\approx 1.171>1$ we get for $x\in(2(\sqrt{2}-1),1]$ using \eqref{dfstatsolalt} together with $x+1-\sqrt{2}\in(\sqrt{2}-1,\sqrt{2}(\sqrt{2}-1)]$ 
\begin{align*}
F(x) & =2\,F(\tfrac{x}{\sqrt{2}})-F(x+1-\sqrt{2})\\
& =a\left(2\,(\sqrt{2}x-\tfrac12 x^2-3(\sqrt{2}-1)^2)-2(\sqrt{2}-1)(x+1-\sqrt{2})+(\sqrt{2}-1)^2\right)\\
& =a\left(2x-x^2-3(\sqrt{2}-1)^2\right),
\end{align*}
which coincides with the representation in Step 2. It can easily be verified that $F$ is indeed a continuous distribution function on $(0,1]$ that fulfills \eqref{dfstatsol} by choosing $a^{-1}=1-3(\sqrt{2}-1)^2$. We recover the density in \cite{Sol2} from $f=F'$ given by
$$f(x)=\begin{cases}
\frac{2x}{1-3(\sqrt{2}-1)^2} & \text{ if } x\in(0,\sqrt{2}-1),\\
\frac{2(\sqrt{2}-1)}{1-3(\sqrt{2}-1)^2} & \text{ if } x\in(\sqrt{2}-1,\sqrt{2}(\sqrt{2}-1)),\\
\frac{2-2x}{1-3(\sqrt{2}-1)^2} & \text{ if } x\in(\sqrt{2}(\sqrt{2}-1),1).
\end{cases}$$
Note that this extends to a continuous density function that is symmetric with respect to the axis $x=\frac12$ due to $F(x)=1-F(1-x)$.
\end{example}

\begin{example}\label{exsymmho}
As a generalization of Example \ref{exsymm} we may also check if for $p=\frac12$ and $\beta=2^{-1/k}$ with $k\geq3$ (these correspond to Garsia numbers $\frac1{\beta}$) the extension method leads to a max-stable limit. In \cite{Win} it was shown that the stationary distribution function belongs to $C^k(0,1)$. For simplicity we consider $k=3$ and write $F=F_{2^{-1/3},2^{-1}}$. If $\nu_{2^{-1/3},2^{-1}}\equiv a$, then by Lemma \ref{repstatdistr} we have $F(x)=a\,x^3$ for all $x\in(0,2^{1/3}-1]$. As above, with some elementary calculations the extension method gives us:\\
\underline{Step 1:} For $x\in(2^{1/3}-1,2^{1/3}(2^{1/3}-1)]$ we have
$$F(x)=a\big(3(2^{1/3}-1)x^2-3(2^{1/3}-1)^2 x+(2^{1/3}-1)^3\big).$$
\underline{Step 2:} For $x\in(2^{1/3}(2^{1/3}-1),2^{2/3}(2^{1/3}-1)]$ we have
$$F(x)=a\big((\tfrac{6}{2^{2/3}}+3)(2^{1/3}-1)x^2-x^3-(\tfrac{6}{2^{1/3}}+3)(2^{1/3}-1)^2 x+3(2^{1/3}-1)^3\big).$$
\underline{Step 3:} For $x\in(2^{2/3}(2^{1/3}-1),2(2^{1/3}-1)]$ we have
$$F(x)=a\big((\tfrac{6}{2^{1/3}}+3\cdot 2^{1/3}+3)(2^{1/3}-1)x^2-2\,x^3-(6\cdot 2^{1/3}+3\cdot 2^{2/3}+3)(2^{1/3}-1)^2 x+7(2^{1/3}-1)^3\big).$$
Note that $2(2^{1/3}-1)>\frac12$ and we may calculate $a$ by $F(\frac12)=\frac12$ and use the reflection method $F(x)=1-F(1-x)$ to extend $F$ to a continuous distribution function on $(0,1)$. However, it remains unclear if the assumption of a constant function $\nu_{2^{-1/k},2^{-1}}$ indeed leads to a distribution function $F_{2^{-1/k},2^{-1}}$ that fulfills \eqref{dfstatsol}. Figure \ref{fig:garsia} shows numerical approximations of the functions $F_{\beta,p}$ and $\nu_{\beta,p}$ for $\beta \in \{2^{-1/3}, 2^{-1/4}, 2^{-1/5}\}$ and $p=\frac12$; in these cases the function $\nu_{\beta,p}$ seems to be constant.
\end{example}

\begin{figure}
\includegraphics{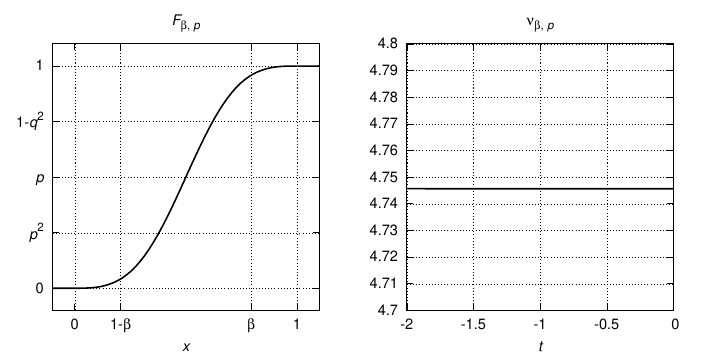}
\includegraphics{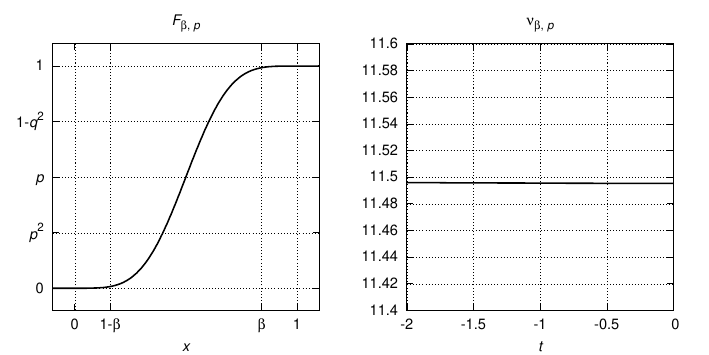}
\includegraphics{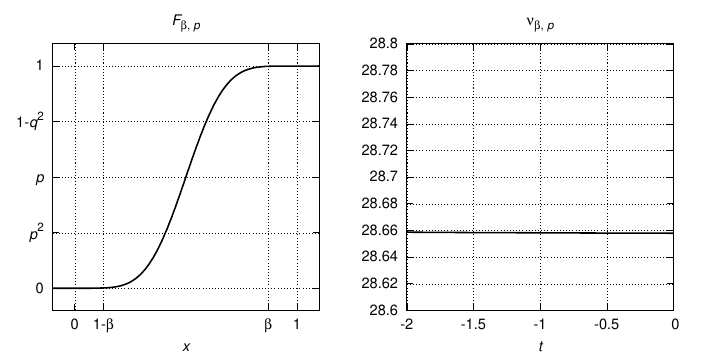}
\caption{\small As Figure~\ref{fig:goldenratio}, but for the parameters $\beta \in \{2^{-1/3}, 2^{-1/4}, 2^{-1/5}\}$ (top to bottom) and $p = \frac12$ in which case the function $\nu_{\beta,p}$ seems to be constant.}
\label{fig:garsia}
\end{figure}

\section{The merge theorem}

In Section 4 we will prove that for $\beta\in(\frac12,1)$ the max-semistable limit theorem for i.i.d.\ random variables $(Y_{k})_{k\in\nat_{0}}$ in Theorem \ref{maxsdoa} has an analogue for the $\AR$-sequence $(X_{k})_{k\in\nat_{0}}$, similarly to what is shown in \cite{Sterk} for the case $\beta\in(0,\frac12)$. In fact we will prove a stronger merge theorem for all $\beta\in(0,1)$, which we first present in the i.i.d.\ situation in this section. Let
\begin{equation}
H_{\beta,q}(x)=\begin{cases}\exp\left(-(-x)^{\log q/\log\beta}\nu_{\beta,q}^{\to}(\log(-x))\right) & \text{ for }x<0,\\
1 & \text{ for }x\geq0, \end{cases}
\end{equation}
be the (continuous) limiting max-semistable distribution function in Theorem \ref{maxsdoa} for $\beta\in(\frac12,1)$, respectively from Theorem 3.1 in \cite{Sterk} for $\beta\in(0,\frac12]$ with $\nu_{\beta,q}^{\to}=\nu_{\beta,q}$ (an extension is not needed here). Recall the normalizing sequences $a_{n}=\beta^{-n}$, $b_{n}\equiv1$ and the subsequence $k(n)=\lfloor q^{-n}\rfloor$ from Theorem \ref{maxsdoa}. For $n\geq k(1)$ write $n=k(m_{n})r_{n}$ with $m_{n}\in\nat$ and $r_{n}\geq1$, uniquely given by $k(m_{n})\leq n<k(m_{n}+1)$. Then the sequence $(r_{n})_{n>k(1)}$ is relatively compact in $[1,c]=[1,q^{-1}]$. Then with the modified normalizing sequence $(a_{n}^{\circ}=a_{m_{n}})_{n\in\nat}$ we get for the maxima $M_{n}^{\circ}=\max\{Y_{0},\ldots,Y_{n-1}\}$ of an i.i.d.\ sequence $(Y_{k})_{k\in\nat_{0}}$ with common distribution function $F_{\beta,p}$ by Theorem 4 in \cite{Megyesi}:

\begin{theorem}[\bf Merge theorem]\label{merge}
With the above notations we get for all $\beta\in(0,1)$ as $n\to\infty$
\begin{equation}\label{mergecomp}
\sup_{x\in\rr}\left|\Pm\left(a_{n}^{\circ}(M_{n}^{\circ}-1)\leq x\right)-H^{r_{n}}_{\beta,q}(x)\right|\to0.
\end{equation}
\end{theorem}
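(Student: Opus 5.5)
We prove the merge statement directly rather than by verifying the hypotheses of Theorem 4 in \cite{Megyesi}; this also makes visible how it extends Theorem \ref{maxsdoa}. Write $\alpha=\log q/\log\beta$ and $a_n^\circ=\beta^{-m_n}$. For $x\ge0$ both distribution functions in \eqref{mergecomp} equal $1$, because $M_n^\circ\le1$ almost surely. It therefore suffices to take the supremum over $x<0$.

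For fixed $x<0$ with $\beta^{m_n}(-x)\le\frac1\beta-1$, we use \eqref{dfreverse} together with Lemma \ref{repstatdistr} for $\beta\in(\frac12,1)$, or Proposition 2.1 of \cite{Sterk} for $\beta\le\frac12$. Exactly as in the proof of Theorem \ref{maxsdoa} this gives
$$\Pm\big(a_n^\circ(M_n^\circ-1)\le x\big)=\big(1-u_n(x)\big)^{n},\qquad u_n(x)=\beta^{m_n\alpha}(-x)^\alpha\nu_{\beta,q}^{\to}(\log(-x))=q^{m_n}(-x)^\alpha\nu_{\beta,q}^{\to}(\log(-x)).$$
Here the log-periodicity removes the shift $m_n\log\beta$. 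On the other side,
$$H_{\beta,q}^{r_n}(x)=\exp\big(-r_n(-x)^\alpha\nu^{\to}_{\beta,q}(\log(-x))\big)=\exp\big(-n\,u_n(x)\cdot\tfrac{q^{-m_n}}{k(m_n)}\big),$$
and $k(m_n)q^{m_n}\to1$ uniformly, since $k(m)=\lfloor q^{-m}\rfloor$. The proof thus reduces to comparing $(1-u)^n$ with $e^{-nu}$, uniformly in $x$.

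Fix $\eps>0$ and use the constants $0<\nu_*\le\nu^{\to}_{\beta,q}\le\nu^*<\infty$, available by continuity and periodicity. Choose $T>0$ with $\exp(-T^\alpha\nu_*)<\eps$. We split into three ranges.

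\emph{(i) $x\le -T\beta^{-m_n}$, i.e.\ $\beta^{m_n}(-x)\ge T\beta^{0}$ scaled appropriately.} More precisely, in the region where $-x\ge T$ but the representation from Lemma \ref{repstatdistr} may fail, we use monotonicity. Both sides are nondecreasing in $x$, so their values are bounded by their values at $x=-T$. At $x=-T$ both are at most about $\exp(-r_nT^\alpha\nu_*)\le\exp(-T^\alpha\nu_*)<\eps$ for large $n$, using $r_n\ge1$. This range needs no representation at all.

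\emph{(ii) $x\in[-T,0)$.} For large $n$ we have $\beta^{m_n}T\le\frac1\beta-1$, so the formula above is valid. Since $u_n(x)\le q^{m_n}T^\alpha\nu^*\to0$ uniformly, we get $n\log(1-u_n)=-nu_n(1+O(u_n))$. Moreover $nu_n\le r_n q^{-m_n}k(m_n)^{-1}\cdots$ stays bounded by roughly $q^{-1}T^\alpha\nu^*$. Hence
$$\big|(1-u_n)^n-e^{-nu_n}\big|\le C\,n u_n^2\le C' q^{m_n}\to0,$$
and $|e^{-nu_n}-H^{r_n}_{\beta,q}(x)|\le nu_n\,|k(m_n)q^{m_n}-1|\to0$, both uniformly on $[-T,0)$. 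This uses $|e^{-a}-e^{-b}|\le|a-b|$ for $a,b\ge0$.

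Combining (i) and (ii) gives $\limsup\sup_x|\cdot|\le2\eps$, and since $\eps$ was arbitrary, the theorem follows. The main obstacle is the uniformity near the left tail and in the transition: the power representation only holds on $(0,\frac1\beta-1]$, and $r_n$ does not converge but oscillates in $[1,q^{-1}]$. The monotonicity and truncation argument in (i), together with bounds that depend on $r_n$ only through $1\le r_n\le q^{-1}$, handles both issues. Continuity of $\nu^{\to}_{\beta,q}$, which holds because $F_{\beta,q}$ is continuous, is not even needed beyond its boundedness above and below. The lower bound $\nu_*>0$ follows from positivity and continuity on one compact period.
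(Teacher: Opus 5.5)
Your argument is correct, but it follows a different route from the paper.

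\textbf{The paper's route.} It argues by compactness. Along a subsequence with $r_{n'}\to r$, it proves continuous convergence $\Pm(a_{n'}^{\circ}(M_{n'}^{\circ}-1)\le x_{n'})\to H_{\beta,q}^{r}(x)$ whenever $x_{n'}\to x$. It upgrades this to uniform convergence on compacts using continuity of $H_{\beta,q}$, and controls the left tail by tightness. It then shows separately that $H_{\beta,q}^{r_{n'}}\to H_{\beta,q}^{r}$ uniformly, and finishes with the subsequence principle.

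\textbf{Your route.} You use the exact i.i.d.\ formula $(1-u_n(x))^n$ together with the identity $H_{\beta,q}^{r_n}(x)=\exp\bigl(-n u_n(x)/(k(m_n)q^{m_n})\bigr)$. This gives explicit bounds, uniform on $[-T,0)$, of order $T^{2\alpha}q^{m_n}$ and $T^{\alpha}\,|1-(k(m_n)q^{m_n})^{-1}|$. You dispose of $x\le -T$ by monotonicity of both distribution functions, which is cleaner than the appeal to Prohorov.

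\textbf{What each buys.} Your route avoids subsequences altogether. Letting $T$ grow logarithmically in $n$, it even produces a rate. It also needs only two-sided bounds on $\nu_{\beta,q}^{\to}$, not its continuity. The paper's softer argument is chosen deliberately, though. Steps 2 and 3 use nothing but subsequential continuous convergence, so they carry over verbatim to the $\AR$ case in Corollary \ref{mergeAR1}. There Theorem \ref{mergeAR1sub} supplies exactly that input, and no closed form like $(1-u_n)^n$ is available.

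\textbf{Minor slips to tidy.}
Range (i) is simply $x\le -T$.
At $x=-T$ you only have $nq^{m_n}\ge k(m_n)q^{m_n}\to1$, not $nq^{m_n}\ge1$, so take $T$ with, say, $\exp(-T^{\alpha}\nu_*/2)<\eps$.
The comparison factor should be $|1-(k(m_n)q^{m_n})^{-1}|$ rather than $|k(m_n)q^{m_n}-1|$; this is harmless.
$r_n<k(m_n+1)/k(m_n)$ can slightly exceed $q^{-1}$ for finite $n$, but boundedness is all you use.
Your $u_n$ clashes with the paper's $u_n(x)=1+\beta^{n}x$ from Section 4.
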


Compared to \cite{Megyesi}, the proof of Theorem \ref{merge} simplifies in our situation due to the fact that the limit distribution function $H_{\beta,q}$ is continuous. We give this simplified proof because its steps will be useful to transfer the result to the $\AR$ case in Section 4.

\begin{proof}[Proof of Theorem \ref{merge}]
It suffices to prove
\begin{equation}\label{mergesuff}
\sup_{x\leq0}\left|\Pm\left(a_{n}^{\circ}(M_{n}^{\circ}-1)\leq x\right)-H^{r_{n}}_{\beta,q}(x)\right|\to0.
\end{equation}
\underline{Step 1:} Consider a subsequence $n'=k(m_{n'})r_{n'}$ such that $r_{n'}\to r$ for some $r\in[1,c]$. Then for any sequence $(x_{n'})\subseteq(\-\infty,0]$ with $x_{n'}\to x\leq0$ we get as in the proof of Theorem \ref{maxsdoa}
\begin{align*}
\Pm\left(a_{n'}^{\circ}(M_{n'}^{\circ}-1)\leq x_{n'}\right) & =\Pm\left(a_{m_{n'}}(M_{k(m_{n'})r_{n'}}^{\circ}-1)\leq x_{n'}\right) =\Pm(Y_{0}\leq a_{m_{n'}}^{-1}x_{n'}+1)^{k(m_{n'})r_{n'}}\\
& =F_{\beta,p}(a_{m_{n'}}^{-1}x_{n'}+1)^{k(m_{n'})r_{n'}}=\left(1-F_{\beta,q}(-a_{m_{n'}}^{-1}x_{n'})\right)^{k(m_{n'})r_{n'}}\\
& =\left(1-(-a_{m_{n'}}^{-1}x_{n'})^{\log q/\log\beta}\nu_{\beta,q}(\log(-a_{m_{n'}}^{-1}x_{n'}))\right)^{k(m_{n'})r_{n'}}\\
& =\exp\left(k(m_{n'})\log\left(1-(-a_{m_{n'}}^{-1}x_{n'})^{\log q/\log\beta}\nu_{\beta,q}(\log(-a_{m_{n'}}^{-1}x_{n'}))\right)\right)^{r_{n'}}\\
& \sim\exp\left(-k(m_{n'})(-a_{m_{n'}}^{-1}x_{n'})^{\log q/\log\beta}\nu_{\beta,q}^{\to}(\log(-x_{n'}))\right)^{r_{n'}}\\
& =\exp\left(-\lfloor q^{-m_{n'}}\rfloor q^{m_{n'}}(-x_{n'})^{\log q/\log\beta}\nu_{\beta,q}^{\to}(\log(-x_{n'}))\right)^{r_{n'}}\\
& \to\exp\left(-(-x)^{\log q/\log\beta}\nu_{\beta,q}^{\to}(\log(-x))\right)^{r}=H_{\beta,q}^{r}(x),
\end{align*}
since $\nu_{\beta,q}^{\to}$ is a $\log(1/\beta)$-periodic and continuous function.\\[1ex]

\underline{Step 2:} Let $F_{n'}(x)=\Pm\left(a_{n'}^{\circ}(M_{n'}^{\circ}-1)\leq x\right)$. Since $H_{\beta,q}$ is continuous, it follows from step 1 that $F_{n'}(x)\to H_{\beta,q}^{r}(x)$ uniformly on compact subsets of $x\leq0$, whenever $(n')$ is a subsequence such that $n'=k(m_{n'})r_{n'}$ with $r_{n'}\to r\in[1,c]$. For $\eps>0$ choose $M>0$ such that for all $x<-M$ we have $F_{n'}(x)<\eps/2$ (for all $n'$ by Prohorov's theorem) and $H_{\beta,q}^{r}(x)<\eps/2$. Then we get
$$\limsup_{n'\to\infty}\sup_{x\leq0}\left|F_{n'}(x)-H_{\beta,q}^{r}(x)\right|\leq\eps+\limsup_{n'\to\infty}\sup_{-M\leq x\leq0}\left|F_{n'}(x)-H_{\beta,q}^{r}(x)\right|=\eps,$$
which shows that
\begin{equation}\label{mergesuff1}
\sup_{x\leq0}\left|F_{n'}(x)-H^{r}_{\beta,q}(x)\right|\to0
\end{equation}
as $n'=k(m_{n'})r_{n'}\to\infty$, whenever $r_{n'}\to r\in[1,c]$.\\[1ex]

\underline{Step 3:} With the same arguments as in step 2, we get
\begin{equation*}
\sup_{x\leq0}\left|H^{r_{n'}}_{\beta,q}(x)-H^{r}_{\beta,q}(x)\right|\to0,
\end{equation*}
whenever $r_{n'}\to r\in[1,c]$. Combining this with \eqref{mergesuff1} we get
\begin{equation}\label{mergesuff2}
\sup_{x\leq0}\left|F_{n'}(x)-H^{r_{n'}}_{\beta,q}(x)\right|\to0
\end{equation}
as $n'=k(m_{n'})r_{n'}\to\infty$, whenever $r_{n'}\to r\in[1,c]$. Since every subsequence of the natural numbers $n=k(m_{n})r_{n}$ contains a further subsequence $n'$ such that $r_{n'}\to r\in[1,c]$ and hence \eqref{mergesuff2} holds, we conclude that \eqref{mergesuff} is valid.
\end{proof}

By Corollary 1 in \cite{Megyesi} it is also possible to modify the normalizing sequence $(a_{n}^{\circ})_{n\in\nat}$ in such a way that the corresponding limiting distribution function as $r_{n'}\to1$ and $r_{n'}\to c$ are the same. This is a consequence of the fact that $H_{\beta,q}$ and $H_{\beta,q}^{c}$ are of the same type due to the periodicity of $\nu_{\beta,q}^{\to}$. In our situation we take the modified sequence $a_{n}^{\ast}=r_{n}^{\log\beta/\log q}a_{n}^{\circ}$, then as $r_{n'}\to r\in[1,c]$ we get as in step 1 of the proof of Theorem \ref{merge} for all $x\leq 0$
\begin{align*}
\Pm\left(a_{n'}^{\ast}(M_{n'}^{\circ}-1)\leq x\right) & =\Pm\left(a_{n'}^{\circ}(M_{n'}^{\circ}-1)\leq r_{n}^{-\log\beta/\log q}x\right)\\
& \to H_{\beta,q}^{r}(r^{-\log\beta/\log q}x)\\
& =\exp\left(-r(-r^{-\log\beta/\log q}x)^{\log q/\log\beta}\nu_{\beta,q}^{\to}(\log(-r^{-\log\beta/\log q}x))\right)\\
& =\exp\left(-(-x)^{\log q/\log\beta}\nu_{\beta,q}^{\to}\left(-\tfrac{\log\beta}{\log q}\,\log r+\log(-x)\right)\right)=: G_{r}(x).
\end{align*}
In case $r=c=q^{-1}$ we get that $G_{c}(x)=G_{1}(x)$ for all $x\leq0$ by the $\log(1/\beta)$-periodicity of $\nu_{\beta,q}^{\to}$.

\section{Extremes of the $\AR$-process}

We follow the outline given in \cite{Sterk} based on techniques in \cite{Cher} and \cite{GMS} to extend the results to a merge theorem for the full parameter range $\beta\in(0,1)$. We write again $n=k(m_{n})r_{n}$ with $k(m_{n})\leq n<k(m_{n}+1)$ and $a_{n}^{\circ}=\beta^{-m_{n}}$. Consider the extremes of the $\AR$ sequence
$$M_{n}=\max\{X_{0},\ldots,X_{n-1}\},$$
where $X_{0}$ has the stationary distribution function $F_{\beta,p}$. For fixed $x<0$ we define
$$u_{n}(x)=1+\beta^{n}x$$
and
\begin{align*}
j_{n}(\beta) & =\begin{cases}\max\{i\in\ganz:u_{n-i}(x)>0\} & \text{ if } \beta\in(0,\frac12],\\
\max\{i\in\ganz:u_{n-i}(x)>2-\frac1{\beta}\} & \text{ if }\beta\in(\frac12,1),\end{cases}\\
& =\begin{cases}\max\{i\in\ganz:i<n+\frac{\log(-x)}{\log\beta}\} & \text{ if } \beta\in(0,\frac12],\\
\max\{i\in\ganz:i<n+\frac{\log(-x)-\log(\frac1{\beta}-1)}{\log\beta}\} & \text{ if } \beta\in(\frac12,1).\end{cases}
\end{align*}
Note that $(u_{n}(x))_{n\in\nat}$ is strictly increasing and $\frac{j_{n}(\beta)}{n}\to1$. We only consider sufficiently large values of $n\in\nat$ such that for a sequence $(x_{n})_{n\in\nat}\subseteq(-\infty,0)$ with $x_{n}\to x<0$ we have $u_{n}(x_{n})\in(0,1]$ and $j_{n}(\beta)\in\nat$, where $(j_{n}(\beta)-n)_{n\in\nat}$ is bounded. For all $i\in\{0,\ldots,j_{n}(\beta)\}$ we get using \eqref{dfreverse}
\begin{align*}
F_{\beta,p}(u_{n-i}(x)) & =F_{\beta,p}(1+\beta^{n-i}x)=1-F_{\beta,q}(-\beta^{n-i}x)\\
& =1-(-\beta^{n-i}x)^{\log q/\log\beta}\nu_{\beta,q}(\log(-\beta^{n-i}x)).
\end{align*}
By definition of $j_{n}(\beta)$ we have
$$-\beta^{n-i}x=1-u_{n-i}(x)<\begin{cases}1 & \text{ if } \beta\in(0,\frac12],\\
\frac1{\beta}-1 & \text{ if } \beta\in(\frac12,1),\end{cases}$$
and can apply Proposition 2.1 in \cite{Sterk}, respectively Lemma \ref{repstatdistr}. By the $\log(1/\beta)$-periodicity of $\nu_{\beta,q}^{\to}$ we get for all $i\in\{0,\ldots,j_{n}(\beta)\}$
\begin{equation}\label{distrfirst}\begin{split}
F_{\beta,p}(u_{n-i}(x)) & =1-q^{n-i}(-x)^{\log q/\log\beta}\nu_{\beta,q}^{\to}(\log(-x))\\
& =:1-q^{n-i}\psi_{\beta,q}(x).
\end{split}\end{equation}
We further define for $s\in\nat$ and $i\in\{0,\ldots,j_{n}(\beta)\}$
$$P_{s,i}:=\Pm(M_{s}\leq u_{n}(x), X_{s-1}\leq u_{n-i}(x))$$
with the special cases
\begin{align}
P_{1,i} & =\Pm(X_{0}\leq u_{n}(x), X_{0}\leq u_{n-i}(x))=\Pm(X_{0}\leq u_{n-i}(x)) \label{Pspec1},\\
P_{s,0} & =\Pm(M_{s}\leq u_{n}(x), X_{s-1}\leq u_{n}(x))=\Pm(M_{s}\leq u_{n}(x)) \label{Pspec2}.
\end{align}
The following Lemma is proven in Lemma 3.1 of \cite{Sterk}:

\begin{lemma}\label{Psequations}
{\rm (a)} For all $s\geq2$ and $i\in\{0,\ldots,j_{n}(\beta)-1\}$ we have the recursion formula
$$P_{s,i}=p\,P_{s-1,0}+q\, P_{s-1,i+1}.$$
{\rm (b)} For all $s\in\{2,\ldots,j_{n}(\beta)+1\}$ we have
$$P_{s,0}=\sum_{i=0}^{s-2}p\,q^{i}P_{1,i}+q^{s-1}P_{1,s-1}.$$
\end{lemma}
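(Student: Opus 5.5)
The plan is to condition on the last noise variable. The key structural fact is that the $\AR$ recursion \eqref{ar1} either leaves the scale unchanged (if $\eps_s=1-\beta$) or pushes $X_s$ far below the threshold (if $\eps_s=0$).

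\textbf{Case $\eps_s=1-\beta$.} Then $X_s=\beta X_{s-1}+1-\beta$, and a direct computation gives
$$X_s\leq u_{n-i}(x)\iff \beta X_{s-1}\leq \beta+\beta^{n-i}x\iff X_{s-1}\leq u_{n-i-1}(x).$$

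\textbf{Case $\eps_s=0$.} Then $X_s=\beta X_{s-1}\leq\beta$. I must check that $\beta\leq u_{n-i}(x)$ for all relevant $i$, so that $\{X_s\leq u_{n-i}(x)\}$ holds automatically.
\begin{itemize}
\item For $\beta\in(\frac12,1)$ the definition of $j_n(\beta)$ gives $u_{n-i}(x)>2-\frac1\beta$ only. So I will use instead that $X_{s-1}\leq 1$ forces $\beta X_{s-1}\leq\beta$, and compare $\beta$ with $u_{n-i}(x)$. I expect the paper to use $u_{n-i}(x)\geq\beta$ in the corresponding range; if this is not literally available, I would refine the condition using $u_{n-i-1}(x)$.
\item For $\beta\leq\frac12$ the condition $u_{n-i}(x)>0$ together with the Cantor structure (no mass in $(\beta,1-\beta)$) should give the same conclusion.
\end{itemize}
Since part (a) is quoted as proven in Lemma 3.1 of \cite{Sterk}, I will follow that argument and only verify the threshold inequality in the new parameter range. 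This verification is the main obstacle.

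\textbf{Part (a).} Using independence of $\eps_s$ from $(X_0,\dots,X_{s-1})$, write
$$P_{s+1,i}=\Pm\big(M_s\leq u_n(x),\,X_s\leq u_{n-i}(x)\big).$$
Here $M_s=\max\{X_0,\dots,X_{s-1}\}$, so the event is $\{M_s\le u_n\}$ intersected with a condition on $X_s$. Splitting on $\eps_s$:
\begin{itemize}
\item On $\{\eps_s=0\}$ the condition on $X_s$ is automatic, giving $p\,\Pm(M_s\leq u_n(x))=p\,P_{s,0}$ by \eqref{Pspec2}.
\item On $\{\eps_s=1-\beta\}$ the condition becomes $X_{s-1}\le u_{n-i-1}(x)$, which is stronger than $X_{s-1}\le u_n(x)$ since $u_\cdot$ is increasing. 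This gives $q\,P_{s,i+1}$.
\end{itemize}
Shifting $s\mapsto s-1$ yields the recursion in (a). The restriction $i\leq j_n(\beta)-1$ ensures that $i+1$ remains in the admissible range where $u_{n-i-1}(x)$ is meaningful.

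\textbf{Part (b).} This follows by induction on $s$ from (a). For $s=2$, (a) with $i=0$ gives $P_{2,0}=pP_{1,0}+qP_{1,1}$. For the inductive step, iterate (a) along the second index:
$$P_{s,0}=pP_{s-1,0}+qP_{s-1,1},\qquad P_{s-1,1}=pP_{s-2,0}+qP_{s-2,2},$$
and so on, obtaining
$$P_{s,0}=\sum_{k=1}^{s-1} p\,q^{k-1}P_{s-k,0}+q^{s-1}P_{1,s-1}.$$
One then substitutes the inductive expressions for $P_{s-k,0}$. More simply, I would prove the general statement
$$P_{s,i}=\sum_{l=0}^{s-2}p\,q^{l}P_{s-1-l,0}+q^{s-1}P_{1,i+s-1}$$
and combine it with stationarity. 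For this, note that $P_{t,0}=\Pm(M_t\le u_n)$ and that $\Pm(X_0\le u_{n-l})=P_{1,l}$ by \eqref{Pspec1}; this should telescope to the stated formula.

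If the direct telescoping does not close, I will instead use the probabilistic interpretation. The event $\{M_s\le u_n(x)\}$ is decomposed according to the number $l$ of consecutive noise values $\eps=1-\beta$ immediately preceding time $s-1$. Exactly $l$ such values followed by a $0$ contributes $p\,q^l\,\Pm(X_0\le u_{n-l}(x))$ after reduction to a single-variable event, and the all-$(1-\beta)$ branch contributes $q^{s-1}P_{1,s-1}$. The range $s\leq j_n(\beta)+1$ guarantees that all indices $n-l$ stay admissible.
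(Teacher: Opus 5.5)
You have the right skeleton: condition on the last innovation for (a), then iterate (a) for (b). The paper gives no proof and only cites Lemma 3.1 of \cite{Sterk}, which was written for $\beta\le\frac12$, so the point you flag is exactly what must be checked. But you leave it open, and it is the only nontrivial step in (a). On $\{\eps_s=0\}$ you need $X_s=\beta X_{s-1}\le u_{n-i}(x)$ to hold automatically, i.e.\ $u_{n-i}(x)\ge\beta$.

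Your suggestion for $\beta\le\frac12$, the gap $(\beta,1-\beta)$ in the support, does not touch the case $u_{n-i}(x)<\beta$, since $X_s\in[0,\beta]$ anyway. The restriction $i\le j_n(\beta)-1$ is also not there to keep $u_{n-i-1}(x)$ ``meaningful''; it is precisely what delivers the inequality. Your parenthetical idea of passing to $u_{n-i-1}(x)$ is the fix:
\begin{itemize}
\item $i+1\le j_n(\beta)$ gives $u_{n-i-1}(x)>2-\frac1\beta$ for $\beta\in(\frac12,1)$, and $u_{n-i-1}(x)>0$ for $\beta\le\frac12$.
\item Since $u_{n-i}(x)=1-\beta+\beta\,u_{n-i-1}(x)$, this yields $u_{n-i}(x)>\beta$ for $\beta\in(\frac12,1)$, and $u_{n-i}(x)>1-\beta\ge\beta$ for $\beta\le\frac12$.
\end{itemize}
The old threshold $0$ would only give $u_{n-i}(x)>1-\beta$, which is not enough when $\beta>\frac12$. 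So the redefinition of $j_n(\beta)$ is needed for (a) itself, not only for \eqref{distrfirst}.

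For (b), your iterated formula $P_{s,i}=\sum_{l=0}^{s-2}p\,q^{l}P_{s-1-l,0}+q^{s-1}P_{1,i+s-1}$, valid for $i+s-1\le j_n(\beta)$, is the right tool. But you stop at ``should telescope'', and stationarity plays no role. Here is how it closes:
\begin{itemize}
\item The sum does not depend on $i$, so $P_{s,1}-P_{s,0}=q^{s-1}(P_{1,s}-P_{1,s-1})$.
\item Then (a) with $i=0$ gives $P_{s+1,0}=P_{s,0}+q^{s}(P_{1,s}-P_{1,s-1})$ for $s\le j_n(\beta)$.
\item Summing from $P_{1,0}$ yields $\sum_{k=0}^{s-2}(q^k-q^{k+1})P_{1,k}+q^{s-1}P_{1,s-1}$, which is (b).
\end{itemize}

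Your fallback decomposition is mis-specified. Splitting on the run of innovations equal to $1-\beta$ just before time $s-1$ produces $p\,q^lP_{s-1-l,0}$, not $p\,q^lP_{1,l}$. To get $P_{1,l}$ directly you must split on the first zero among $\eps_1,\dots,\eps_{s-1}$:
\begin{itemize}
\item On $\{\eps_1=\dots=\eps_l=1-\beta\}$ one has $X_k=\beta^kX_0+1-\beta^k$, so the constraints up to time $l$ collapse to $X_0\le u_{n-l}(x)$.
\item After a zero at time $l+1$, one has $X_k\le1-\beta^{k-l-1}(1-\beta)\le u_n(x)$ for $l+1\le k\le s-1$.
\end{itemize}
That last step again rests on $u_{n-m}(x)>\beta$ for $m\le j_n(\beta)-1$.
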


Part (a) of the following Lemma is proven in Lemma 3.2 of \cite{Sterk}.

\begin{lemma}\label{maxconvmain}
{\rm (a)} For all $s\in\{2,\ldots,j_{n}(\beta)+1\}$ we have
$$\Pm(M_{s}\leq u_{n}(x))=1-\big(p(s-1)+1\big)q^{n}\psi_{\beta,q}(x).$$
{\rm (b)} For every subsequence $n'=k(m_{n'})r_{n'}$ such that $r_{n'}\to r\in[1,c]$ we have as $n'\to\infty$
$$\Pm\big(M_{j_{m_{n'}}(\beta)+1}\leq u_{m_{n'}}(x_{n'})\big)^{\lfloor\frac{n'}{j_{m_{n'}}(\beta)}\rfloor}\to\exp(-pr\,\psi_{\beta,q}(x))$$
for any sequence $(x_{n'})\subseteq(-\infty,0)$ with $x_{n'}\to x<0$.
\end{lemma}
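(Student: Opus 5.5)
For part (a) I will use Lemma \ref{Psequations}(b) together with \eqref{Pspec1} and \eqref{distrfirst}. For $i\in\{0,\ldots,j_n(\beta)\}$ we have
$$P_{1,i}=\Pm(X_0\leq u_{n-i}(x))=F_{\beta,p}(u_{n-i}(x))=1-q^{n-i}\psi_{\beta,q}(x).$$
Substituting into $P_{s,0}=\sum_{i=0}^{s-2}p\,q^{i}P_{1,i}+q^{s-1}P_{1,s-1}$ for $s\le j_n(\beta)+1$ gives two parts. The constant terms contribute $\sum_{i=0}^{s-2}pq^i+q^{s-1}=1$. The $\psi$-terms contribute $-\big(\sum_{i=0}^{s-2}p\,q^{i}q^{n-i}+q^{s-1}q^{n-s+1}\big)\psi_{\beta,q}(x)=-\big(p(s-1)+1\big)q^n\psi_{\beta,q}(x)$. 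This is exactly the claim, using $P_{s,0}=\Pm(M_s\le u_n(x))$ from \eqref{Pspec2}. The only thing to check is that all indices $i\le s-1\le j_n(\beta)$ lie in the admissible range, which holds by assumption.

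For part (b), I apply (a) with $n$ replaced by $m=m_{n'}$, $s=j_m(\beta)+1$ and $x$ replaced by $x_{n'}$. Note that $j_m(\beta)$ depends on $x$. I will define it with the limit $x$, or with $x_{n'}$; since $x_{n'}\to x<0$, the difference $j_m(\beta)-m$ stays bounded, and all that matters is that (a) applies, i.e.\ $-\beta^{m-i}x_{n'}$ lies below the threshold for $i\le j_m$. This yields
$$\Pm\big(M_{j_m+1}\le u_m(x_{n'})\big)=1-\big(pj_m+1\big)q^{m}\psi_{\beta,q}(x_{n'}).$$
Since $j_m/m\to1$ and $q^m m\to0$, the subtracted term tends to $0$. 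I then take logarithms and use $\log(1-t)\sim -t$, so the $\lfloor n'/j_m\rfloor$-th power behaves like
$$\exp\Big(-\big\lfloor\tfrac{n'}{j_m}\big\rfloor (pj_m+1)q^m\psi_{\beta,q}(x_{n'})\Big).$$
Now $\lfloor n'/j_m\rfloor(pj_m+1)= p\,n'+O(n'/j_m)+O(j_m)$. Also $n'q^m=k(m)r_{n'}q^m=\lfloor q^{-m}\rfloor q^m r_{n'}\to r$, while $(n'/j_m)q^m$ and $j_mq^m$ both tend to $0$ because $n'q^m$ is bounded and $j_m\to\infty$ linearly in $m$. Hence the exponent tends to $-pr\,\psi_{\beta,q}(x)$, using continuity of $\psi_{\beta,q}$ on $(-\infty,0)$. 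That continuity follows from the continuity and periodicity of $\nu_{\beta,q}^{\to}$.

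The main obstacle is the bookkeeping in (b): I must ensure (a) is legitimately applied with a moving argument $x_{n'}$. This means checking that $u_m(x_{n'})\in(0,1]$ and that $-\beta^{m-i}x_{n'}<\frac1\beta-1$ (resp.\ $<1$) for all $i\le j_m$. I would handle this by defining $j_m$ via a slightly smaller $|x|$-bound, e.g.\ with $x-\delta$, which changes $j_m$ only by a bounded amount and so does not affect the limit.
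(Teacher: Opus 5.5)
Your proposal is correct and follows the paper's argument. For (b) you insert $s=j_{m_{n'}}(\beta)+1$ into (a) and show $\lfloor n'/j_{m_{n'}}(\beta)\rfloor\,(p\,j_{m_{n'}}(\beta)+1)\,q^{m_{n'}}\to pr$, as the paper does, though your decomposition $p n'+O(n'/j)+O(j)$ is cleaner than the paper's rearrangement. For (a), which the paper only cites from Lemma 3.2 of \cite{Sterk}, your geometric-sum evaluation of Lemma \ref{Psequations}(b) with \eqref{distrfirst} is exactly the needed computation.

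You also note that $j_{m_{n'}}(\beta)$ depends on $x_{n'}$, a point the paper leaves implicit. The simplest fix is to compute $j_{m_{n'}}(\beta)$ with $x_{n'}$ itself, so that (a) applies verbatim. Your choice $x-\delta$ also works, and any bounded shift of $j$ can be absorbed via monotonicity of $s\mapsto\Pm(M_s\le u)$ and a union bound with $1-F_{\beta,p}(u_{m_{n'}}(x_{n'}))=q^{m_{n'}}\psi_{\beta,q}(x_{n'})$.
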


\begin{proof}
We only have to prove part (b). For $s=j_{m_{n'}}(\beta)+1$ we get from part (a)
\begin{align*}
\Pm\big(M_{j_{m_{n'}}(\beta)+1}\leq u_{m_{n'}}(x_{n'})\big)^{\lfloor\frac{n'}{j_{m_{n'}}(\beta)}\rfloor} & =\big(1-(p\,j_{m_{n'}}(\beta)+1)q^{m_{n'}}\psi_{\beta,q}(x_{n'})\big)^{\lfloor\frac{n'}{j_{m_{n'}}(\beta)}\rfloor}\\
& =\left(1-\frac{(p\,j_{m_{n'}}(\beta)+1)q^{m_{n'}}\lfloor\frac{n'}{j_{m_{n'}}(\beta)}\rfloor\psi_{\beta,q}(x_{n'})}{\lfloor\frac{n'}{j_{m_{n'}}(\beta)}\rfloor}\right)^{\lfloor\frac{n'}{j_{m_{n'}}(\beta)}\rfloor}\\
&\to\exp(-pr\,\psi_{\beta,q}(x)),
\end{align*}
since $\psi_{\beta,q}$ is continuous and
$$(p\,j_{m_{n'}}(\beta)+1)q^{m_{n'}}\lfloor\tfrac{n'}{j_{m_{n'}}(\beta)}\rfloor=\frac{p\,j_{m_{n'}}(\beta)+1}{j_{m_{n'}}(\beta)}q^{m_{n'}}\lfloor\tfrac{k(m_{n'})r_{n'}}{j_{m_{n'}}(\beta)}\rfloor\frac{j_{m_{n'}}(\beta)}{k(m_{n'})}\lfloor q^{-m_{n'}}\rfloor\to pr,$$
where $j_{n}(\beta)\to\infty$ with $\frac{k(n)}{j_{n}(\beta)}=\frac{\lfloor q^{-n}\rfloor}{j_{n}(\beta)}\to\infty$.
\end{proof}

Note that the limit in Lemma \ref{maxconvmain}(b) is
\begin{equation}\label{limitH}
\exp(-pr\,\psi_{\beta,q}(x))=\exp\big((-x)^{\log q/\log\beta}\nu_{\beta,q}^{\to}(\log(-x))\big)^{pr}=H_{\beta,q}^{pr}(x).
\end{equation}

\begin{theorem}\label{mergeAR1sub}
Let $n'=k(m_{n'})r_{n'}$ be a subsequence such that $r_{n'}\to r\in[1,c]$. Then for  any sequence $(x_{n'})\subseteq(-\infty,0]$ with $x_{n'}\to x<0$ we have
$$\Pm(a_{n'}^{\circ}(M_{n'}-1)\leq x_{n'})\to H_{\beta,q}^{pr}(x).$$
\end{theorem}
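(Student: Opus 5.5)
We follow the block technique of \cite{Cher} and \cite{GMS}: cut the index set $\{0,\ldots,n'-1\}$ into consecutive blocks of length $j:=j_{m_{n'}}(\beta)+1$, show that the block maxima are asymptotically independent, and then apply Lemma \ref{maxconvmain}(b).

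\textbf{Setup.} Write $u:=u_{m_{n'}}(x_{n'})=1+\beta^{m_{n'}}x_{n'}$, so that
$$\Pm(a_{n'}^{\circ}(M_{n'}-1)\leq x_{n'})=\Pm(M_{n'}\leq u).$$
Put $\ell=\lfloor n'/j\rfloor$, which tends to infinity since $k(m)/j_m(\beta)\to\infty$. Then $\ell j\leq n'<(\ell+1)j$.

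\textbf{Leftover block.} By stationarity and monotonicity,
$$\Pm(M_{(\ell+1)j}\leq u)\leq\Pm(M_{n'}\leq u)\leq\Pm(M_{\ell j}\leq u).$$
Lemma \ref{maxconvmain}(a) shows that a single block has $\Pm(M_j\leq u)\to1$, because $(pj+1)q^{m_{n'}}\to0$. So it suffices to prove
$$\Pm(M_{\ell j}\leq u)-\Pm(M_j\leq u)^{\ell}\to0,$$
and the same for $\ell+1$ in place of $\ell$. Indeed, Lemma \ref{maxconvmain}(b) together with \eqref{limitH} gives $\Pm(M_j\leq u)^{\ell}\to H^{pr}_{\beta,q}(x)$, and $\Pm(M_j\leq u)^{\ell+1}$ has the same limit.

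\textbf{Asymptotic independence via a gap.} Use the representation $X_{k+g}=\beta^{g}X_k+\sum_{i=1}^{g}\beta^{g-i}\eps_{k+i}$. Choose a gap length $g=g_{n'}$ with $g\to\infty$ but $\ell g\,q^{m_{n'}}\to0$; since $\ell q^{m_{n'}}\asymp 1/j_{m_{n'}}(\beta)\asymp 1/m_{n'}$, any $g=o(m_{n'})$ works, for example $g=\lfloor\sqrt{m_{n'}}\rfloor$.

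Remove the last $g$ indices of each block. By Lemma \ref{maxconvmain}(a), or simply a union bound with \eqref{distrfirst}, the total probability that the maximum over the removed indices exceeds $u$ is at most $\ell g\,q^{m_{n'}}\psi_{\beta,q}(x_{n'})\to0$. The same estimate shows that shortening each block from $j$ to $j-g$ changes $\Pm(M_j\leq u)^{\ell}$ by only $o(1)$.

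\textbf{Coupling.} Compare the truncated blocks with a version in which each block starts from an independent stationary copy. In the original process, the block starting after the gap depends on the past only through $\beta^{g}X_k$, which is at most $\beta^{g}$. Replace each block's initial value by $\tilde X=\sum_{i\le g}\beta^{g-i}\eps$ plus an independent stationary remainder $\beta^{g}\tilde X_0$. The truncated blocks then become independent, and the block events change only where some $X_s$ lies within $2\beta^{g}\beta^{s-k}$ of $u$.

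This is the step I expect to be the main obstacle: we must control
$$\Pm\big(u-\delta<X_s\leq u\big)\quad\text{with }\delta\leq 2\beta^{g}$$
uniformly near $u$. By \eqref{distrfirst} and the log-periodic representation, $1-F_{\beta,p}(u-\delta)\le C\,(1-u+\delta)^{\log q/\log\beta}$. If we take $g$ so that $\beta^{g}\ll\beta^{m_{n'}}$ this is not available, since we need $g=o(m)$.

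So instead I would use the monotone sandwich. Dropping the positive term $\beta^{g}X_k\ge 0$ only decreases the process, while adding the bound $\beta^{g}\cdot 1$ increases it. This yields
$$\Pm(\tilde M\leq u-\beta^{g}\text{-shift})^{\ell}\lesssim\Pm(M_{\ell j}\leq u)\lesssim\Pm(\tilde M\leq u)^{\ell},$$
where the shift decays like $\beta^{g+(s-k)}$ inside each block. Only the first $O(m_{n'}-g)$ positions of a block are affected at the scale $\beta^{m_{n'}}$. I would handle them by shifting $x_{n'}$ slightly, which is allowed because the statement holds for arbitrary $x_{n'}\to x$, and by using the continuity of $\psi_{\beta,q}$.

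Combining the three steps yields $\Pm(M_{n'}\leq u)\to H^{pr}_{\beta,q}(x)$.
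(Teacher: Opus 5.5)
Your reductions are sound and correspond to the paper's $T_1$, $T_2$, $T_4$: the leftover block, removing gaps of length $g=o(m_{n'})$ by a union bound with \eqref{distrfirst}, and Lemma~\ref{maxconvmain}. Using $\lfloor n'/(j_{m_{n'}}(\beta)+1)\rfloor$ instead of $\lfloor n'/j_{m_{n'}}(\beta)\rfloor$ blocks is harmless.

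The gap is the asymptotic independence step (the paper's $T_3$), and the monotone sandwich does not close it. Your blocks have length $j=m_{n'}+O(1)$. At lag $N\in\{g+1,\ldots,j\}$ after the anchor, the perturbation $\beta^{N}$ exceeds the threshold scale $1-u=\beta^{m_{n'}}|x_{n'}|$ at every position except the last $O(1)$, mostly by an exponentially large factor. So ``the first $O(m_{n'}-g)$ positions'' is essentially the whole block, while shifting $x_{n'}$ by $o(1)$ moves $u$ only by $o(\beta^{m_{n'}})$.

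In fact both sides of your sandwich are trivial.
\begin{itemize}
\item Restarted from $0$, the block values are $Y_N\le 1-\beta^N<u$ for $N\le j_{m_{n'}}(\beta)$. So such a block exceeds $u$ with probability $O(q^{m_{n'}})$, and your upper bound is at least $\Pm(A_1)\bigl(1-O(q^{m_{n'}})\bigr)^{\ell}\to1$, since $\ell q^{m_{n'}}\asymp 1/m_{n'}$.
\item Restarted from $1$, the first block value equals $Y_{g+1}+\beta^{g+1}=1>u$ whenever the $g+1$ innovations after the anchor all equal $1-\beta$. So each block fails with probability at least $q^{g+1}$, and your lower bound is at most $(1-q^{g+1})^{\ell-1}\to0$, because $\ell q^{g}\asymp q^{g-m_{n'}}/m_{n'}\to\infty$ for $g=o(m_{n'})$.
\end{itemize}
You therefore only get $0\le\Pm(M_{n'}\le u)\le 1$ in the limit. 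No choice of $g$ helps: the gap cost $\ell g q^{m_{n'}}\asymp g/m_{n'}$ forces $g=o(m_{n'})$, whereas a worst-case shift at scale $\beta^{m_{n'}}$ would need $g\ge m_{n'}-O(1)$.

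What is missing is a probabilistic, not worst-case, control of how the event $A_1$ (no exceedance in the previous block) distorts the law of the block endpoint $X_k$ near $1$. Since $Y_i\le 1-\beta^i$, an exceedance at lag $i$ after $X_k$ requires $X_k>u_{m_{n'}-i}(x_{n'})$.

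The paper works with the conditional distribution function $G$ of $X_k$ given $A_1$, in two steps. First it bounds $\Pm(A_2\mid A_1)$ from above through the values $G(u_{m_{n'}-i}(x_{n'}))$, using $\Pm(Y_i<1-\beta^i)=1-q^i$. Then it bounds these values of $G$ from above. For this it uses two facts: $\{X_0\le\delta_{n'}\}\subseteq A_1$ with $\delta_{n'}=u_{m_{n'}-j_{m_{n'}}(\beta)+g+1}(x_{n'})$ (your $g$ is the paper's $\ell_{n'}$), and $X_0$ is independent of $Y$ in $X_k=\beta^kX_0+Y$. This yields
\[
0\le\Pm(A_2\mid A_1)-\Pm(A_2)\le\bigl(1-F_{\beta,p}(\delta_{n'})\bigr)\bigl(1-\Pm(A_2)\bigr)=O(q^{g})\cdot O(m_{n'}q^{m_{n'}}),
\]
a relative error of order $q^{g}$ per block. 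Summed over the roughly $q^{-m_{n'}}/m_{n'}$ blocks, this gives $T_3=O(q^{g})\to0$.

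So the argument needs an estimate showing that conditioning on $A_1$ removes only mass of order $q^{g}$. A deterministic coupling of the starting value cannot provide this.
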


As in the proof of the merge theorem, we can conclude the following result directly from Theorem \ref{mergeAR1sub} by carrying out Step 2 and Step 3 in the proof of Theorem \ref{merge}.

\begin{corollary}[\bf Merge theorem for the $\AR$]\label{mergeAR1}
As $n\to\infty$ we have
\begin{equation}\label{mergeARcomp}
\sup_{x\in\rr}\big|\Pm(a_{n}^{\circ}(M_{n}-1)\leq x)-H_{\beta,q}^{pr_{n}}(x)\big|\to0.
\end{equation}
In particular we get for the subsequence $(k(n))_{n\in\nat}$ as $n\to\infty$
$$\sup_{x\in\rr}\big|\Pm(a_{n}(M_{k(n)}-1)\leq x)-H_{\beta,q}^{p}(x)\big|\to0.$$
\end{corollary}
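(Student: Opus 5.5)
We argue by subsequences, exactly as in Steps 2 and 3 of the proof of Theorem \ref{merge}, with Theorem \ref{mergeAR1sub} replacing Step 1. Put $F_n(x)=\Pm(a_n^{\circ}(M_n-1)\leq x)$. Since $X_k\in[0,1]$ almost surely, $M_n\leq 1$ and so $F_n(x)=1=H_{\beta,q}^{pr_n}(x)$ for all $x\geq0$. Hence it suffices to show
$$\sup_{x\leq0}\big|F_n(x)-H_{\beta,q}^{pr_n}(x)\big|\to0.$$
Every subsequence of $(n)$ has a further subsequence $n'$ with $r_{n'}\to r\in[1,c]$, because $[1,c]$ is compact. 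So it is enough to prove this uniform convergence along such $n'$.

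\emph{Step 1 (uniformity on compacts).} Fix $n'$ with $r_{n'}\to r$. Theorem \ref{mergeAR1sub} gives $F_{n'}(x_{n'})\to H_{\beta,q}^{pr}(x)$ for every sequence $x_{n'}\to x<0$. The limit $H_{\beta,q}^{pr}$ is continuous, since $\nu_{\beta,q}^{\to}$ is continuous and periodic. The usual contradiction argument then yields uniform convergence on every compact $[-M,-\delta]\subset(-\infty,0)$: if the supremum over $[-M,-\delta]$ stayed above some $\eta>0$, pick maximizing points $x_{n'}$, extract a convergent subsequence, and contradict Theorem \ref{mergeAR1sub}.

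\emph{Step 2 (tails and the endpoint $0$).} This is the only delicate point, because Theorem \ref{mergeAR1sub} requires $x<0$. I would use monotonicity of distribution functions instead of Prohorov's theorem. Let $\eps>0$.
\begin{itemize}
\item[] For the left tail, choose $M$ with $H_{\beta,q}^{pr}(-M)<\eps/2$, which is possible since $\psi_{\beta,q}(x)\to\infty$ as $x\to-\infty$. For $x\leq -M$ we then have $0\leq F_{n'}(x)\leq F_{n'}(-M)\to H_{\beta,q}^{pr}(-M)$ and $0\leq H_{\beta,q}^{pr}(x)<\eps/2$. So on $(-\infty,-M]$ the difference is eventually at most $\eps$.
\item[] Near $0$, choose $\delta>0$ with $H_{\beta,q}^{pr}(-\delta)>1-\eps/2$, which is possible since $\psi_{\beta,q}(x)\to0$ as $x\uparrow0$ because $\nu_{\beta,q}^{\to}$ is bounded. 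For $x\in[-\delta,0]$ both $F_{n'}(x)\geq F_{n'}(-\delta)\to H_{\beta,q}^{pr}(-\delta)$ and $H_{\beta,q}^{pr}(x)$ lie in $[1-\eps,1]$ for large $n'$.
\end{itemize}
Combining this with Step 1 gives $\sup_{x\leq0}|F_{n'}(x)-H_{\beta,q}^{pr}(x)|\to0$.

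\emph{Step 3 (replacing $r$ by $r_{n'}$).} We have $H_{\beta,q}^{pr_{n'}}(x)=\exp(-pr_{n'}\psi_{\beta,q}(x))$. On $[-M,-\delta]$ the function $\psi_{\beta,q}$ is bounded, so this converges uniformly to $H_{\beta,q}^{pr}$. The same tail and endpoint argument as in Step 2 applies, since $r_{n'}\geq1$ and the functions are monotone in $x$. Therefore $\sup_{x\leq0}|H_{\beta,q}^{pr_{n'}}-H_{\beta,q}^{pr}|\to0$, and the triangle inequality yields \eqref{mergeARcomp} along $n'$, hence along the full sequence.

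For the particular statement, take $n=k(m)$. For large $m$ the sequence $k$ is strictly increasing, because $q^{-1}>1$. Then $k(m_n)\leq n<k(m_n+1)$ forces $m_n=m$, so $r_n=1$ and $a_n^{\circ}=\beta^{-m}=a_m$. The general statement with $r_n=1$ gives the claim with limit $H_{\beta,q}^{p}$.
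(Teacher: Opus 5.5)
Your proof is correct and follows the paper's route. Theorem \ref{mergeAR1sub} takes the place of Step 1, and you then carry out Steps 2 and 3 of the proof of Theorem \ref{merge} together with the subsequence-compactness argument, exactly as the paper indicates. Your one deviation, controlling the left tail and the region near $0$ by monotonicity of the distribution functions instead of Prohorov's theorem, is a welcome refinement: Theorem \ref{mergeAR1sub} only covers limits $x<0$, and the paper's phrase ``uniformly on compact subsets of $x\leq0$'' glosses over the endpoint $0$.
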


\begin{remark}
Comparing \eqref{mergeARcomp} with the i.i.d.\ result \eqref{mergecomp}, according to page 67 in \cite{LLR}, the $\AR$-process has extremal index $p$. This extremal index can be interpreted in several ways, in particular as
\begin{itemize}
\item the loss of i.i.d.\ degrees of freedom,
\item the reciprocal of the expected cluster size of the $\AR$-process,
\item or recurrences near unstable fixed points of corresponding dynamical systems;
\end{itemize}
see \cite{MFS} for details.
\end{remark}

\begin{proof}[Proof of Theorem \ref{mergeAR1sub}]
Since we have
\begin{align*}
\Pm(a_{n'}^{\circ}(M_{n'}-1)\leq x_{n'}) & 
=\Pm(M_{n'}\leq 1+\beta^{m_{n'}}x_{n'})=\Pm(M_{n'}\leq u_{m_{n'}}(x_{n'})),
\end{align*}
in view of Lemma \ref{maxconvmain}(b) and \eqref{limitH} it remains to show
\begin{equation}\label{convleft}
\left|\Pm(M_{n'}\leq u_{m_{n'}}(x_{n'}))-\Pm\big(M_{j_{m_{n'}}(\beta)+1}\leq u_{m_{n'}}(x_{n'})\big)^{\lfloor\frac{n'}{j_{m_{n'}}(\beta)}\rfloor}\right|\to0.
\end{equation}
To prove \eqref{convleft} we write $s_{n'}:=\lfloor\frac{n'}{j_{m_{n'}}(\beta)}\rfloor$ and divide the integers $\{0,\ldots,s_{n'}j_{m_{n'}}(\beta)\}$ into intervals
\begin{align*}
I_{i} & =\{(i-1)j_{m_{n'}}(\beta),\ldots,i\,j_{m_{n'}}(\beta)-\ell_{n'}-1\}\\
I_{i}^{\ast} & =\{i\,j_{m_{n'}}(\beta)-\ell_{n'},\ldots,i\,j_{m_{n'}}(\beta)-1\}
\end{align*}
for $i\in\{1,\ldots,s_{n'}\}$, where the intervals $I_{i}$ all have length $j_{m_{n'}}(\beta)-\ell_{n'}$, separated by gaps $I_{i}^{\ast}$ of length $\ell_{n'}$. We will choose the sequence $(\ell_{n'})$ such that $\ell_{n'}\to\infty$ and $\ell_{n'}/j_{m_{n'}}(\beta)\to0$. Then we have
\begin{equation}\label{ellasymp}
j_{m_{n'}}(\beta)q^{m_{n'}}\to0\quad\text{ and }\quad s_{n'}\ell_{n'}q^{m_{n'}}\sim n'q^{m_{n'}}\,\frac{\ell_{n'}}{j_{m_{n'}}(\beta)}\sim r_{n'}\,\frac{\ell_{n'}}{j_{m_{n'}}(\beta)}\to0.
\end{equation}
For any set $I$ of nonnegative integers we write $M(I)=\max\{X_{i}: i\in I\}$, then we have
$$\left|\Pm(M_{n'}\leq u_{m_{n'}}(x_{n'}))-\Pm\big(M_{j_{m_{n'}}(\beta)+1}\leq u_{m_{n'}}(x_{n'})\big)^{s_{n'}}\right|\leq T_{1}+T_{2}+T_{3}+T_{4},$$
where
\begin{align*}
T_{1} & =\left|\Pm(M_{n'}\leq u_{m_{n'}}(x_{n'}))-\Pm(M_{j_{m_{n'}}(\beta)s_{n'}}\leq u_{m_{n'}}(x_{n'}))\right|\\
T_{2} & =\left|\Pm(M_{j_{m_{n'}}(\beta)s_{n'}}\leq u_{m_{n'}}(x_{n'}))-\Pm\left(\bigcap_{i=1}^{s_{n'}}\{M(I_{i})\leq u_{m_{n'}}(x_{n'})\}\right)\right|\\
T_{3} & =\left|\Pm\left(\bigcap_{i=1}^{s_{n'}}\{M(I_{i})\leq u_{m_{n'}}(x_{n'})\}\right)-\prod_{i=1}^{s_{n'}}\Pm\big(M(I_{i})\leq u_{m_{n'}}(x_{n'})\big)\right|\\
T_{4} & =\left|\prod_{i=1}^{s_{n'}}\Pm\big(M(I_{i})\leq u_{m_{n'}}(x_{n'})\big)-\Pm\big(M_{j_{m_{n'}}(\beta)+1}\leq u_{m_{n'}}(x_{n'})\big)^{s_{n'}}\right|
\end{align*}
and \eqref{convleft} is proven if we show $T_{1},\ldots,T_{4}\to0$.\\[1ex]
\underline{Proof of $T_{1}\to0$:} We have $n'-j_{m_{n'}}(\beta)\leq j_{m_{n'}}(\beta)s_{n'}\leq n'$ and thus we get using \eqref{distrfirst}
\begin{align*}
0 & \leq \Pm(M_{j_{m_{n'}}(\beta)s_{n'}}\leq u_{m_{n'}}(x_{n'}))-\Pm(M_{n'}\leq u_{m_{n'}}(x_{n'}))\\
& =\Pm(X_{i}>u_{m_{n'}}(x_{n'})\text{ for some }i\in\{j_{m_{n'}}(\beta)s_{n'}+1,\ldots,n'\})\\
& \leq(n'-j_{m_{n'}}(\beta)s_{n'})(1-F_{\beta,p}(u_{m_{n'}}(x_{n'})))\\
&\leq j_{m_{n'}}(\beta)q^{m_{n'}}\psi_{\beta,q}(x_{n'})\to0,
\end{align*}
since $\psi_{\beta,q}(x_{n'})\to\psi_{\beta,q}(x)\geq0$ and \eqref{ellasymp} holds. This shows that $T_{1}\to0$.\\[1ex]
\underline{Proof of $T_{2}\to0$:} We have using \eqref{distrfirst}
\begin{align*}
0 & \leq\Pm\left(\bigcap_{i=1}^{s_{n'}}\{M(I_{i})\leq u_{m_{n'}}(x_{n'})\}\right)-\Pm(M_{j_{m_{n'}}(\beta)s_{n'}}\leq u_{m_{n'}}(x_{n'}))\\
& =\Pm\left(X_{i}>u_{m_{n'}}(x_{n'})\text{ for some }i\in I_{1}^{\ast}\cup\cdots\cup I_{s_{n'}^{\ast}}\right)\\
& \leq s_{n'}\ell_{n'}(1-F_{\beta,p}(u_{m_{n'}}(x_{n'})))\\
& = s_{n'}\ell_{n'}q^{m_{n'}}\psi_{\beta,q}(x_{n'})\to0,
\end{align*}
since $\psi_{\beta,q}(x_{n'})\to\psi_{\beta,q}(x)\geq0$ and \eqref{ellasymp} holds. This shows that $T_{2}\to0$.\\[1ex]
\underline{Proof of $T_{4}\to0$:} For $0\leq x\leq y\leq1$ and $s>1$ we have $0\leq y^{s}-x^{s}\leq s(y-x)$ by the mean value theorem. Hence we get by stationarity of the $\AR$ sequence
\begin{align*}
0 & \leq T_{4}=\Pm\big(M(I_{1})\leq u_{m_{n'}}(x_{n'})\big)^{s_{n'}}-\Pm\big(M_{j_{m_{n'}}(\beta)+1}\leq u_{m_{n'}}(x_{n'})\big)^{s_{n'}}\\
& \leq s_{n'}\left(\Pm\big(M(I_{1})\leq u_{m_{n'}}(x_{n'})\big)-\Pm\big(M_{j_{m_{n'}}(\beta)+1}\leq u_{m_{n'}}(x_{n'})\big)\right)\\
& \leq s_{n'}\Pm(X_{i}>u_{m_{n'}}(x_{n'})\text{ for some }i\in\{j_{m_{n'}}(\beta)-\ell_{n'},\ldots,j_{m_{n'}}(\beta)\})\\
& \leq s_{n'}(\ell_{n'}+1)q^{m_{n'}}\psi_{\beta,q}(x_{n'})\to0,
\end{align*}
as in the proof of $T_{2}\to0$.\\[1ex]
\underline{Proof of $T_{3}\to0$:} In Appendix A of \cite{Sterk} it is shown that the events $A_{s}=\{M(I_{s})\leq u_{m_{n'}}(x_{n'})\}$ fulfill
$$0\leq\Pm\Big(\bigcap_{s=1}^{s_{n'}}A_{s}\Big)-\prod_{s=1}^{s_{n'}}\Pm(A_{s})\leq\sum_{s=2}^{s_{n'}}\left(\Pm\Big(A_{s}\Big|\bigcap_{i=1}^{s-1}A_{i}\Big)-\Pm(A_{s})\right).$$
Since the intervals $I_s$ are disjoint and all have the same length, by the Markov property and the stationarity of the $\AR$-process we have
$$\Pm\Big(A_{s}\Big|\bigcap_{i=1}^{s-1}A_{i}\Big)-\Pm(A_{s})=\Pm(A_{s}\mid A_{s-1})-\Pm(A_{s})=\Pm(A_{2}\mid A_{1})-\Pm(A_{2}).$$
Below we will show that 
\begin{equation}\label{remain}
0\leq\Pm(A_{2}\mid A_{1})-\Pm(A_{2})\leq C(j_{m_n'}(\beta)-\ell_{n'})q^{2m_{n'}-j_{m_{n'}}(\beta)+\ell_{n'}}
\end{equation}
for some constant $C>0$. Since $s_{n'}\sim\frac{n'}{j_{m_{n'}}(\beta)}\sim\frac{q^{-m_{n'}}r_{n'}}{j_{m_{n'}}(\beta)}$ this results in
$$0\leq T_{3}\leq C\,s_{n'}(j_{m_n'}(\beta)-\ell_{n'})q^{2m_{n'}-j_{m_{n'}}(\beta)+\ell_{n'}}\sim C\,r_{n'}q^{m_{n'}-j_{m_{n'}}(\beta)+\ell_{n'}}\sim C'\,q^{\ell_{n'}}\to0$$
for some constant $C'>0$ due to $\ell_{n'}\to\infty$, $\frac{\ell_{n'}}{j_{m_{n'}}(\beta)}\to0$ and the boundedness of the sequences $(r_{n'})_{n'\in\nat}$, $(m_{n'}-j_{m_{n'}}(\beta))_{n'\in\nat}$. It remains to show \eqref{remain}.\\[1ex]
Note that 
\begin{align*}
A_1 & =\{M(I_1)\leq u_{m_{n'}}(x_{n'})\}=\{X_0\leq u_{m_{n'}}(x_{n'}),\ldots,X_{j_{m_{n'}}(\beta)-\ell_{n'}-1}\leq u_{m_{n'}}(x_{n'})\}
\intertext{and}
A_2 & =\{M(I_2)\leq u_{m_{n'}}(x_{n'})\}=\{X_{j_{m_{n'}}(\beta)}\leq u_{m_{n'}}(x_{n'}),\ldots,X_{2j_{m_{n'}}(\beta)-\ell_{n'}-1}\leq u_{m_{n'}}(x_{n'})\}.
\end{align*}
Using regular conditional probability for the measure $\Qm:=\Pm(\cdot\mid A_1)$ we get
\begin{equation}\label{regcondprob}\begin{split}
\Pm(A_2\mid A_1) & =\Qm(A_2)=\int_0^{u_{m_{n'}}(x_{n'})}\Qm(A_2\mid X_{j_{m_{n'}}(\beta)-\ell_{n'}-1}=v)\,d \Qm_{X_{j_{m_{n'}}(\beta)-\ell_{n'}-1}}(v)\\
& =\int_0^{u_{m_{n'}}(x_{n'})}\Qm(A_2\mid X_{j_{m_{n'}}(\beta)-\ell_{n'}-1}=v)\,dG(v)\\
& =\int_0^{u_{m_{n'}}(x_{n'})}\Pm(A_2\mid X_{j_{m_{n'}}(\beta)-\ell_{n'}-1}=v)\,dG(v),
\end{split}\end{equation}
where
$$G(v)=\Pm\big(X_{j_{m_{n'}}(\beta)-\ell_{n'}-1}\leq v\,\big|\,X_0\leq u_{m_{n'}}(x_{n'}),\ldots,X_{j_{m_{n'}}(\beta)-\ell_{n'}-1}\leq u_{m_{n'}}(x_{n'})\big)$$
and the last equality in \eqref{regcondprob} follows by the Markov property of the $\AR$. For all integers $k\geq0$ and $i\geq1$ we have by \eqref{statsol}
\begin{equation}\label{statsolrep}\begin{split}
X_{k+i} & =\sum_{j=0}^\infty \beta^j \eps_{k+i-j}=\beta^i\sum_{j=0}^\infty \beta^{j-i} \eps_{k+i-j}\\
& =\beta^i\sum_{j=i}^\infty \beta^{j-i} \eps_{k+i-j}+\beta^i\sum_{j=0}^{i-1} \beta^{j-i} \eps_{k+i-j}\\
& =\beta^i\sum_{j=0}^\infty \beta^{j} \eps_{k-j}+\sum_{j=0}^{i-1} \beta^{j} \eps_{k+i-j}=\beta^i X_k+Y_{i,k},
\end{split}\end{equation}
where $Y_{i,k}=\sum_{j=0}^{i-1} \beta^{j} \eps_{k+i-j}$ is independent of $X_0,\ldots,X_k$.
Further we have $Y_{i,k}\leq\sum_{j=0}^{i-1} \beta^{j}(1-\beta)=1-\beta^i$ and hence
\begin{equation}\label{Yprop}
\Pm(Y_{i,k}\leq1-\beta^i)=1\quad,\quad\Pm(Y_{i,k}=1-\beta^i)=q^i\quad,\quad\Pm(Y_{i,k}<1-\beta^i)=1-q^i.
\end{equation}
Since we will only need the independence of $X_0$ and \eqref{Yprop}, we suppress the dependence on $k$ and simply write $Y_i=Y_{i,k}$. With the decomposition \eqref{statsolrep} the integrand in \eqref{regcondprob} can be rewritten as
\begin{equation}\label{regcond2}\begin{split}
& \Pm(A_2\mid X_{j_{m_{n'}}(\beta)-\ell_{n'}-1}=v)\\
& \quad=\Pm(X_{j_{m_{n'}}(\beta)}\leq u_{m_{n'}}(x_{n'}),\ldots,X_{2j_{m_{n'}}(\beta)-\ell_{n'}-1}\leq u_{m_{n'}}(x_{n'})\mid X_{j_{m_{n'}}(\beta)-\ell_{n'}-1}=v)\\
& \quad=\Pm(Y_{\ell_{n'}+1}\leq u_{m_{n'}}(x_{n'})-\beta^{\ell_{n'}+1}v,\ldots,Y_{j_{m_{n'}}(\beta)}\leq u_{m_{n'}}(x_{n'})-\beta^{j_{m_{n'}}(\beta)}v)
\end{split}\end{equation}
We partition the integration interval in \eqref{regcondprob} into $j_{m_{n'}}(\beta)-\ell_{n'}+1$ parts as
\begin{align*}
[0,u_{m_{n'}}(x_{n'})] = & [0,u_{m_{n'}-j_{m_{n'}}(\beta)}(x_{n'})] \cup (u_{m_{n'}-j_{m_{n'}}(\beta)}(x_{n'}),u_{m_{n'}-(j_{m_{n'}}(\beta)-1)}(x_{n'})] \cup\ldots\\
& \ldots\cup  (u_{m_{n'}-(\ell_{n'}+2)}(x_{n'}),u_{m_{n'}-(\ell_{n'}+1)}(x_{n'})] \cup (u_{m_{n'}-(\ell_{n'}+1)}(x_{n'}),u_{m_{n'}}(x_{n'})].
\end{align*}
Note that $u_{m_{n'}}(x_{n'})-\beta^iv<1-\beta^i$ if and only if $v>1+\beta^{m_{n'}-i}x_{n'}=u_{m_{n'}-i}(x_{n'})$. Hence, if $0\leq v\leq u_{m_{n'}-j_{m_{n'}}(\beta)}(x_{n'})$,the right-hand side of \eqref{regcond2} can be trivially bounded from above by 1. On the other hand, if $v>u_{m_{n'}-i}(x_{n'})$ for some $i\in\{\ell_{n'}+1,\ldots,j_{m_{n'}}(\beta)\}$, then the right-hand side of \eqref{regcond2} can be bounded from above by
\begin{align*}
& \Pm(Y_{i}\leq u_{m_{n'}}(x_{n'})-\beta^iv,\ldots,Y_{j_{m_{n'}}(\beta)}\leq u_{m_{n'}}(x_{n'})-\beta^{j_{m_{n'}}(\beta)}v)\\
& \quad\leq\Pm(Y_{i}\leq u_{m_{n'}}(x_{n'})-\beta^iv)\leq\Pm(Y_{i}<1-\beta^i)=1-q^i.
\end{align*}
Altogether, using $G(0)=0$ and $G(u_{m_{n'}}(x_{n'}))=1$, \eqref{regcondprob}, \eqref{regcond2} and the above considerations yield
\begin{equation}\label{Pfinal}\begin{split}
\Pm(A_2\mid A_1)\leq & \quad G(u_{m_{n'}-j_{m_{n'}}(\beta)}(x_{n'}))\\
& +\sum_{i=\ell_{n'}+2}^{j_{m_{n'}}(\beta)}(1-q^i)\big(G(u_{m_{n'}-(i-1)}(x_{n'}))-G(u_{m_{n'}-i}(x_{n'}))\big)\\
& + (1-q^{\ell_{n'}+1})\big(1-G(u_{m_{n'}-\ell_{n'}+1}(x_{n'})\big).
\end{split}\end{equation}
Rewriting the terms in the sum as
\begin{align*}
& (1-q^i)\big(G(u_{m_{n'}-(i-1)}(x_{n'}))-G(u_{m_{n'}-i}(x_{n'}))\big)\\
& \quad =\big((1-q^{i-1})G(u_{m_{n'}-(i-1)}(x_{n'}))-(1-q^i)G(u_{m_{n'}-i}(x_{n'}))\big)+p\,q^{i-1}G(u_{m_{n'}-(i-1)}(x_{n'}))
\end{align*}
gives a telescopic expression plus some remainder and \eqref{Pfinal} can be simplified to
\begin{equation}\label{Pfinal1}\begin{split}
\Pm(A_2\mid A_1)\leq & \quad G(u_{m_{n'}-j_{m_{n'}}(\beta)}(x_{n'}))\\
& +(1-q^{\ell_{n'}+1})G(u_{m_{n'}-(\ell_{n'}+1)}(x_{n'})-(1-q^{j_{m_{n'}}(\beta)})G(u_{m_{n'}-j_{m_{n'}}(\beta)}(x_{n'}))\\
& +\sum_{i=\ell_{n'}+1}^{j_{m_{n'}}(\beta)-1} p\,q^i G(u_{m_{n'}-i}(x_{n'}))+ (1-q^{\ell_{n'}+1})\big(1-G(u_{m_{n'}-(\ell_{n'}+1)}(x_{n'})\big)\\
= & \quad 1-q^{\ell_{n'}+1}+q^{j_{m_{n'}}(\beta)}G(u_{m_{n'}-j_{m_{n'}}(\beta)}(x_{n'}))+\sum_{i=\ell_{n'}+1}^{j_{m_{n'}}(\beta)-1} p\,q^i G(u_{m_{n'}-i}(x_{n'})).
\end{split}\end{equation}
We now choose $\delta_{n'}:=u_{m_{n'}-j_{m_{n'}}(\beta)+\ell_{n'}+1}(x_{n'})=1+\beta^{m_{n'}-j_{m_{n'}}(\beta)+\ell_{n'}+1}x_{n'}\in(0,1)$ for sufficiently large $n'\in\nat$. Then $X_0\leq\delta_{n'}$ implies that $\Pm$-almost surely
$$X_i=\beta^i X_0+Y_i\leq\beta^i\delta_{n'}+1-\beta^i=1-\beta^i(1-\delta_{n'})$$
by \eqref{statsolrep} and \eqref{Yprop}. Note that $1-\beta^i(1-\delta_{n'})\leq u_{m_{n'}}(x_{n'})$ if and only if $\delta_{n'}-1\leq\beta^{m_{n'}-i}x_{n'}$ which is true for all $i\in\{1,\ldots,j_{m_{n'}}(\beta)-\ell_{n'}-1\}$ and sufficiently large $n'\in\nat$ such that $x_{n'}<0$. Hence for sufficiently large $n'\in\nat$ we have $\Pm$-almost surely
\begin{equation}\label{X0rel}
\{X_0\leq \delta_{n'},\,X_1\leq u_{m_{n'}}(x_{n'}),\ldots,X_{j_{m_{n'}}(\beta)-\ell_{n'}-1}\leq u_{m_{n'}}(x_{n'})\}=\{X_0\leq \delta_{n'}\}.
\end{equation}
Further note that by \eqref{statsolrep} we have
$$X_{j_{m_{n'}}(\beta)-\ell_{n'}-1}=\beta^{j_{m_{n'}}(\beta)-\ell_{n'}-1}X_{0}+Y_{j_{m_{n'}}(\beta)-\ell_{n'}-1}$$
and $Y_{j_{m_{n'}}(\beta)-\ell_{n'}-1}$ is independent of $X_{0}$. Hence we get
\begin{equation}\label{Pcompl}\begin{split}
& \Pm\left(X_{j_{m_{n'}}(\beta)-\ell_{n'}-1}>u_{m_{n'}-i}(x_{n'}), X_{0}\leq\delta_{n'}\right)\\
& \quad=\Pm\left(Y_{j_{m_{n'}}(\beta)-\ell_{n'}-1}>u_{m_{n'}-i}(x_{n'})-\beta^{j_{m_{n'}}(\beta)-\ell_{n'}-1}X_{0}, X_{0}\leq\delta_{n'}\right)\\
& \quad\geq\Pm\left(Y_{j_{m_{n'}}(\beta)-\ell_{n'}-1}>u_{m_{n'}-i}(x_{n'}), X_{0}\leq\delta_{n'}\right)\\
& \quad=\Pm\left(Y_{j_{m_{n'}}(\beta)-\ell_{n'}-1}>u_{m_{n'}-i}(x_{n'})\right)\cdot\Pm\left(X_{0}\leq\delta_{n'}\right).
\end{split}\end{equation}
Since $j_{m_{n'}}(\beta)>\ell_{n'}$ and $x_{n'}<0$ for sufficiently large $n'\in\nat$, we get 
$$u_{m_{n'}}(x_{n'})=1+\beta^{m_{n'}}x_{n'}\geq 1+\beta^{m_{n'}-j_{m_{n'}}(\beta)+\ell_{n'}+1}x_{n'}=\delta_{n'}$$
and it follows that
\begin{align*}
& \Pm\left(X_{j_{m_{n'}}(\beta)-\ell_{n'}-1}>u_{m_{n'}-i}(x_{n'}), X_{0}\leq\delta_{n'},X_{1}\leq u_{m_{n'}}(x_{n'}),\ldots,X_{j_{m_{n'}}(\beta)-\ell_{n'}-1}\leq u_{m_{n'}}(x_{n'})\right)\\
& \quad\leq\Pm\left(X_{j_{m_{n'}}(\beta)-\ell_{n'}-1}>u_{m_{n'}-i}(x_{n'}), X_{0}\leq u_{m_{n'}}(x_{n'}),\ldots,X_{j_{m_{n'}}(\beta)-\ell_{n'}-1}\leq u_{m_{n'}}(x_{n'})\right)\\
& \quad\leq\frac{\Pm\left(X_{j_{m_{n'}}(\beta)-\ell_{n'}-1}>u_{m_{n'}-i}(x_{n'}), X_{0}\leq u_{m_{n'}}(x_{n'}),\ldots,X_{j_{m_{n'}}(\beta)-\ell_{n'}-1}\leq u_{m_{n'}}(x_{n'})\right)}{\Pm\left(X_{0}\leq u_{m_{n'}}(x_{n'}),\ldots,X_{j_{m_{n'}}(\beta)-\ell_{n'}-1}\leq u_{m_{n'}}(x_{n'})\right)}\\
& \quad=\Pm\left(X_{j_{m_{n'}}(\beta)-\ell_{n'}-1}>u_{m_{n'}-i}(x_{n'})\,\Big|\,X_{0}\leq u_{m_{n'}}(x_{n'}),\ldots,X_{j_{m_{n'}}(\beta)-\ell_{n'}-1}\leq u_{m_{n'}}(x_{n'})\right)\\
& \quad=1-G\big(u_{m_{n'}-i}(x_{n'})\big).
\end{align*}
Hence for sufficiently large $n'\in\nat$ and $i\in\{1,\ldots,j_{m_{n'}}(\beta)-\ell_{n'}-1\}$ we get using \eqref{X0rel} and \eqref{Pcompl}
\begin{align*}
& G\big(u_{m_{n'}-i}(x_{n'})\big)\\
& \leq 1-\Pm\left(X_{j_{m_{n'}}(\beta)-\ell_{n'}-1}>u_{m_{n'}-i}(x_{n'}), X_{0}\leq\delta_{n'},X_{1}\leq u_{m_{n'}}(x_{n'}),\ldots,X_{j_{m_{n'}}(\beta)-\ell_{n'}-1}\leq u_{m_{n'}}(x_{n'})\right)\\
& =1-\Pm\left(X_{j_{m_{n'}}(\beta)-\ell_{n'}-1}>u_{m_{n'}-i}(x_{n'}), X_{0}\leq\delta_{n'}\right)\\
& \leq1-\Pm\left(Y_{j_{m_{n'}}(\beta)-\ell_{n'}-1}>u_{m_{n'}-i}(x_{n'})\right)\cdot\Pm\left(X_{0}\leq\delta_{n'}\right).
\end{align*}
Since by \eqref{statsolrep} and \eqref{distrfirst} we have
\begin{align*}
& \Pm\left(Y_{j_{m_{n'}}(\beta)-\ell_{n'}-1}>u_{m_{n'}-i}(x_{n'})\right)\\
& \quad=\Pm\left(X_{j_{m_{n'}}(\beta)-\ell_{n'}-1}>u_{m_{n'}-i}(x_{n'})-\beta^{j_{m_{n'}}(\beta)-\ell_{n'}-1}X_{0}\right)\\
& \quad\geq\Pm\left(X_{j_{m_{n'}}(\beta)-\ell_{n'}-1}>u_{m_{n'}-i}(x_{n'})\right)\\
& \quad=1-F_{\beta,p}(u_{m_{n'}-i}(x_{n'}))=q^{m_{n'}-i}\psi_{\beta,q}(x_{n'})
\end{align*}
it follows that
$$G\big(u_{m_{n'}-i}(x_{n'})\big)\leq 1-q^{m_{n'}-i}\psi_{\beta,q}(x_{n'})\cdot\Pm\left(X_{0}\leq\delta_{n'}\right)$$
and plugging this into \eqref{Pfinal1} yields
\begin{align*}
\Pm(A_2\mid A_1)\leq & \,1-q^{\ell_{n'}+1}+q^{j_{m_{n'}}(\beta)}\left(1-q^{m_{n'}-j_{m_{n'}}(\beta)}\psi_{\beta,q}(x_{n'})\cdot\Pm\left(X_{0}\leq\delta_{n'}\right)\right)\\
& +\sum_{i=\ell_{n'}+1}^{j_{m_{n'}}(\beta)-1} p\,q^i \left(1-q^{m_{n'}-i}\psi_{\beta,q}(x_{n'})\cdot\Pm\left(X_{0}\leq\delta_{n'}\right)\right)\\
= & \,1-q^{\ell_{n'}+1}+q^{j_{m_{n'}}(\beta)}-q^{m_{n'}}\psi_{\beta,q}(x_{n'})\cdot\Pm\left(X_{0}\leq\delta_{n'}\right)\\
& +p\sum_{i=\ell_{n'}+1}^{j_{m_{n'}}(\beta)-1} q^i - p\,q^{m_{n'}} \psi_{\beta,q}(x_{n'})\cdot\Pm\left(X_{0}\leq\delta_{n'}\right)\,\big(j_{m_{n'}}(\beta)-\ell_{n'}-1\big).
\end{align*}
Note that
$$p\sum_{i=\ell_{n'}+1}^{j_{m_{n'}}(\beta)-1} q^i =p\,q^{\ell_{n'}+1}\,\frac{1-q^{j_{m_{n'}}(\beta)-\ell_{n'}-1}}{1-q}=q^{\ell_{n'}+1}-q^{j_{m_{n'}}(\beta)}$$
and hence we get by Lemma \ref{maxconvmain}(a) and stationarity
\begin{align*}
\Pm(A_2\mid A_1) & \leq 1-\left(p\big(j_{m_{n'}}(\beta)-\ell_{n'}-1\big)+1\right)q^{m_{n'}}\psi_{\beta,q}(x_{n'})\cdot\Pm\left(X_{0}\leq\delta_{n'}\right)\\
& =\left(1-\Pm\left(X_{0}\leq\delta_{n'}\right)\right)+\Pm\left(X_{0}\leq\delta_{n'}\right)\left(1-\left(p\big(j_{m_{n'}}(\beta)-\ell_{n'}-1\big)+1\right)q^{m_{n'}}\psi_{\beta,q}(x_{n'})\right)\\
& =\left(1-\Pm\left(X_{0}\leq\delta_{n'}\right)\right)+\Pm\left(X_{0}\leq\delta_{n'}\right)\cdot\Pm\left(M_{j_{m_{n'}}(\beta)-\ell_{n'}}\leq u_{m_{n'}-i}(x_{n'})\right)\\
& =\left(1-\Pm\left(X_{0}\leq\delta_{n'}\right)\right)+\Pm\left(X_{0}\leq\delta_{n'}\right)\cdot\Pm(A_{2}).
\end{align*}
Since by \eqref{distrfirst} we have
$$\Pm\left(X_{0}\leq\delta_{n'}\right)=F_{\beta,p}(\delta_{n'}) = F_{\beta,p}(u_{m_{n'}-j_{m_{n'}}(\beta)+\ell_{n'}+1}(x_{n'}))=1-q^{m_{n'}-j_{m_{n'}}(\beta)+\ell_{n'}+1}\psi_{\beta,q}(x_{n'})$$
it follows again by Lemma \ref{maxconvmain}(a) and stationarity that
\begin{align*}
0\leq \Pm(A_2\mid A_1)-\Pm(A_{2}) & \leq \left(1-\Pm\left(X_{0}\leq\delta_{n'}\right)\right)+\Pm\left(X_{0}\leq\delta_{n'}\right)\cdot\Pm(A_{2})-\Pm(A_{2})\\
& =\left(1-\Pm\left(X_{0}\leq\delta_{n'}\right)\right)\cdot(1-\Pm(A_{2}))\\
& =\left(1-\Pm\left(X_{0}\leq\delta_{n'}\right)\right)\cdot \left(p\big(j_{m_{n'}}(\beta)-\ell_{n'}-1\big)+1\right)q^{m_{n'}}\psi_{\beta,q}(x_{n'})\\
& \leq C\left(j_{m_{n'}}(\beta)-\ell_{n'}\right)q^{2m_{n'}-j_{m_{n'}}(\beta)+\ell_{n'}}
\end{align*}
for some $C>0$, since $\psi_{\beta,q}(x_{n'})\to \psi_{\beta,q}(x)$. This shows \eqref{remain} and concludes the proof.
\end{proof}

\section{Connections with dynamical systems}

In \cite{BSS} it is stated that the reason why Bernoulli convolutions have drawn attention lies in its strong connection with deterministic dynamical systems discovered since the 1980's with reference to the origins \cite{AY}, \cite{PU}, and \cite{Led}. This connection is also given in section 4 of \cite{Sterk} and \cite{Bovier}, \cite{HolSte}, \cite{KemPer}. In this section we briefly describe further connections between the distribution function $F_{\beta,p}$ of the $\AR$ process \eqref{ar1} and both deterministic and random dynamical systems.

\subsection{Deterministic dynamics}

For $\beta, p \in (0,1)$ consider the following map:
\[
S_{\beta, p} : [0,1]^2 \to [0,1]^2, \quad
S_{\beta, p}(x,y) = 
\begin{cases}
	(\beta x, y/p)						& \text{if } y \in [0,p), \\
	(\beta x + (1-\beta), (y-p)/(1-p))	& \text{if } y \in [p,1].
\end{cases}
\]
For $p=\frac{1}{2}$ this map is topologically conjugate to the generalized baker's transformation introduced in \cite{AY}:
\[
T_\beta : [-1,1]^2 \to [-1,1]^2, \quad
T_\beta(x,y) = 
\begin{cases}
	(\beta x - (1-\beta), 2y+1) & \text{if } y \in [-1,0), \\
	(\beta x + (1-\beta), 2y-1) & \text{if } y \in [0,1].
\end{cases}
\]
Indeed, a straightforward computation shows that $h^{-1} \circ T_\beta \circ h = S_{\beta,1/2}$ where the conjugacy $h : [0,1]^2 \to [-1,1]^2$ is given by $h(x,y)=(2x-1,2y-1)$. Therefore, the maps $S_{\beta,1/2}$ and $T_\beta$ have the same dynamics. We can interpret $S_{\beta,p}$ as a further generalization of $T_\beta$, albeit in a different coordinate system.

We briefly sketch how the distribution function $F_{\beta,p}$ yields an $S_{\beta,p}$-invariant measure on the state space $[0,1]^2$. The sets $(a,b] \times (c,d] \subseteq [0,1]^2$ form a semi-algebra. The map
\[
\mu_{\beta,p}((a,b] \times (c,d]) = (F_{\beta,p}(b)-F_{\beta,p}(a))(d-c)
\]
can be shown to be countably additive and hence extends to a measure on the generated Borel $\sigma$-algebra.

\begin{lemma}
For all Borel-measurable sets $A \subseteq [0,1]^2$ we have
\[
\mu_{\beta,p}(S_{\beta,p}^{-1}(A)) = \mu_{\beta,p}(A).
\]
\end{lemma}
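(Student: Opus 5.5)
The plan is to verify invariance on the generating semi-algebra of rectangles $R=(a,b]\times(c,d]\subseteq[0,1]^2$ and then extend. Both $\mu_{\beta,p}\circ S_{\beta,p}^{-1}$ and $\mu_{\beta,p}$ are finite measures on the Borel $\sigma$-algebra of $[0,1]^2$; $S_{\beta,p}$ is Borel measurable because it is piecewise affine on the two Borel pieces $y<p$ and $y\ge p$. The rectangles form a $\pi$-system generating the Borel sets, and both measures have the same total mass; by continuity of $F_{\beta,p}$ the boundary segments $\{0\}\times[0,1]$ and $[0,1]\times\{0\}$ are null sets for both measures. Hence by Dynkin's $\pi$-$\lambda$ theorem (or the uniqueness part of Carath\'eodory's extension) it suffices to prove
\[\mu_{\beta,p}(S_{\beta,p}^{-1}(R))=\mu_{\beta,p}(R).\]

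To compute the preimage, split according to the two branches. A point $(x,y)$ with $y\in[0,p)$ is mapped into $R$ iff $\beta x\in(a,b]$ and $y/p\in(c,d]$, i.e.\ $x\in(a/\beta,b/\beta]\cap[0,1]$ and $y\in(pc,pd]$. A point with $y\in[p,1]$ is mapped into $R$ iff $\beta x+1-\beta\in(a,b]$ and $(y-p)/q\in(c,d]$, i.e.\ $x\in(\frac{a}{\beta}+1-\frac1\beta,\frac{b}{\beta}+1-\frac1\beta]\cap[0,1]$ and $y\in(p+qc,p+qd]$. These two sets are disjoint rectangles (up to the harmless endpoint $y=p$ and the intersection with $[0,1]$). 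Since $F_{\beta,p}$ vanishes on $(-\infty,0]$ and equals $1$ on $[1,\infty)$, intersecting with $[0,1]$ does not change the $F_{\beta,p}$-increments. The total is therefore
\[
\big(F_{\beta,p}(\tfrac b\beta)-F_{\beta,p}(\tfrac a\beta)\big)p(d-c)+\big(F_{\beta,p}(\tfrac b\beta+1-\tfrac1\beta)-F_{\beta,p}(\tfrac a\beta+1-\tfrac1\beta)\big)q(d-c).
\]

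Regrouping and applying the functional equation \eqref{dfstatsol} at $x=b$ and at $x=a$ gives exactly $(F_{\beta,p}(b)-F_{\beta,p}(a))(d-c)=\mu_{\beta,p}(R)$, which is the identity needed on rectangles.

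The main (mild) obstacle is bookkeeping. First, the half-open intervals do not align perfectly with the branch split at $y=p$ and with the clipping of $x$-intervals to $[0,1]$. Second, one must justify that $\mu_{\beta,p}$ is a genuine product measure of the law of $F_{\beta,p}$ and Lebesgue measure, so that the extension argument applies. Both points are handled by the continuity of $F_{\beta,p}$, which makes single points and lines null, and by the support property of $F_{\beta,p}$.
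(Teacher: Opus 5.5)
Your proposal is correct and takes essentially the same route as the paper. Both arguments check invariance on the generating rectangles $(a,b]\times(c,d]$, write $S_{\beta,p}^{-1}$ of such a rectangle as the disjoint union of the two branch preimages, and combine the resulting increments using the functional equation \eqref{dfstatsol} at $x=a$ and $x=b$. Your additional bookkeeping fills in details the paper passes over: identifying $\mu_{\beta,p}$ as the product of the law of $F_{\beta,p}$ with Lebesgue measure, the $\pi$-$\lambda$ extension with a null boundary, clipping the $x$-intervals to $[0,1]$, and the endpoint $y=p$ when $d=1$.
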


\begin{proof}
It suffices to check this for the generators of the $\sigma$-algebra. For $(a,b] \times (c,d] \subseteq [0,1]^2$ we have the following disjoint union:
\[
S_{\beta,p}^{-1}( (a,b] \times (c,d] ) = A_1 \cup A_2,
\]
where
\[
\begin{split}
A_1 & = (a/\beta, b/\beta] \times (pc, pd], \\
A_2 & = (a/\beta+1-1/\beta,b/\beta+1-1/\beta] \times (p+qc, p+qd]
\end{split}
\]
are disjoint sets due to $p+qc\geq p\geq pd$. The definition of $\mu_{\beta,p}$ gives
\[
\begin{split}
	\mu_{\beta,p}(A_1) & = (F_{\beta,p}(b/\beta) - F_{\beta,p}(a/\beta))(pd-pc), \\
	\mu_{\beta,p}(A_2) & = (F_{\beta,p}(b/\beta+1-1/\beta) - F_{\beta,p}(a/\beta+1-1/\beta))(qd-qc).
\end{split}
\]
Adding the results and using the functional equation \eqref{dfstatsol} gives
\[
\mu_{\beta,p}(S_{\beta,p}^{-1}( (a,b] \times (c,d] ))
	= (F_{\beta,p}(b) - F_{\beta,p}(a))(d-c)
	= \mu_{\beta,p}((a,b]\times (c,d]),
\]
as desired.
\end{proof}

In the particular case $p=\frac{1}{2}$ we obtain (modulo the coordinate transformation given by the conjugacy $h$) the invariant measure that was introduced in \cite{AY} for the map $T_\beta$; it was proven that this measure is of Bowen-Ruelle type and strongly mixing.

The map $S_{\beta,p}$ is a skew-product: the dynamics in the $y$-direction drives the dynamics in the $x$-direction but not vice versa. The driving map
\[
g : [0,1] \to [0,1], \quad
g(y) =
\begin{cases}
	y/p				& \text{if } y \in [0,p), \\
	(y-p) / (1-p)	& \text{if } y \in [p,1],
\end{cases}
\]
has the Lebesgue measure as an ergodic invariant measure. Birkhoff's Ergodic Theorem implies that the iterates of $g$ land in the intervals $[0,p)$ and $[p,1]$ with probability $p$ and $q$, respectively. The iterates of $g$ determine which of the maps $x \mapsto \beta x$ and $x \mapsto \beta x + (1-\beta)$ is applied. In addition, the map $g$ has exponential decay of correlations \cite{BG}. In this way, the projection of the iterations of $S_{\beta,p}$ onto the $x$-axis can be interpreted as a deterministic approximation of the $\AR$ process \eqref{ar1}. This perspective connects with recent research on establishing extreme value laws for time series obtained by evaluating a scalar observable along orbits of a deterministic dynamical system; see \cite{LFFFHKNTV} and references therein.

\subsection{Random dynamics}

The distribution function $F_{\beta,p}$ also appears when studying diverging orbits obtained from randomly iterating two functions $f_0, f_1 : \mathbb{R} \to \mathbb{R}$ that satisfy appropriate conditions.
The set of all binary sequences $\Omega = \{(\omega_n)_{n \in \mathbb{N}} : \omega_n \in \{0,1\}\}$ can be equipped with probability measures as follows. First, for cylinder sets
\[
[\alpha_1,\dots,\alpha_n] = \{ \omega \in \Omega : \omega_i = \alpha_i \text{ for all } i=1,\dots,n \}
\]
with $\alpha_{i}\in\{0,1\}$ we define
\[
\mu_p([\alpha_1,\dots,\alpha_n]) = p^{n-(\alpha_1 + \dots + \alpha_n)} q^{\alpha_1 + \dots + \alpha_n}.
\]
Next, $\mu_p$ can be extended to a unique probability measure, known as the Bernoulli measure, on the $\sigma$-algebra generated by the collection of all cylinder sets.

For an initial point $x_0 \in \mathbb{R}$ and a symbol sequence $\omega \in \Omega$ we define the following sequence of iterates:
\[
x_n = \bigcirc_{i=1}^n f_{\omega_i}(x_0) = (f_{\omega_n} \circ \dots \circ f_{\omega_2} \circ f_{\omega_1})(x_0), \quad n \in \mathbb{N}.
\]
In \cite{MS} the following monotonicity assumptions were made:
\begin{itemize}
\item[(A1)] The functions $f_0$ and $f_1$ are continuous and strictly increasing.
\item[(A2)] There exist real numbers $0 < a \leq b < 1$ such that
\[
f_0(0) = 0, \quad
f_0(a) = 1, \quad
f_1(b) = 0, \quad
f_1(1) = 1.
\]
\item[(A3)] There exists a real number $s > 1$ such that the following implications hold:
\[
\begin{split}
x \leq 0 & \quad\Rightarrow\quad f_0(x) \leq s x, \\
x \geq a & \quad\Rightarrow\quad f_0(x) \geq 1+s(x-a), \\
x \leq b & \quad\Rightarrow\quad f_1(x) \leq s(x-b), \\ 
x \geq 1 & \quad\Rightarrow\quad f_1(x) \geq 1+s(x-1).
\end{split}
\]
These inequalities ensure that iterates landing outside the unit interval will diverge to either plus or minus infinity.
\item[(A4)] Optionally, the following symmetry condition can be imposed: for all $x \in \mathbb{R}$ we have
\[
f_0(1-x) + f_1(x) = 1.
\]
\end{itemize}

In \cite{MS} it was shown that the function
\[
G_{p} : \mathbb{R} \to \mathbb{R}, \quad
G_{p}(x) = \mu_p(\{ \omega \in \Omega \,:\, \bigcirc_{i=1}^n f_{\omega_i}(x) \to \infty \})
\]
is continuous and satisfies the following properties under assumptions (A1)--(A3):
\begin{enumerate}
\item[(i)] $G_{p}(x) = 0$ for all $x \leq 0$ and $G_{p}(x) = 1$ for all $x \geq 1$;
\item[(ii)] $G_{p}(x) \leq G_p(y)$ whenever $x \leq y$;
\item[(iii)] $G_p(x) = p\,G_p(f_0(x)) + q\,G_p(f_1(x))$ for all $x \in \mathbb{R}$;
\item[(iv)] $G_{p}(1-x) = 1-G_{q}(x)$ for all $x \in \mathbb{R}$ whenever assumption (A4) holds in addition.
\end{enumerate}
For the particular choice $f_0(x) = x/\beta$ and $f_1(x) = x/\beta + 1-1/\beta$ with $\beta \in (0,\frac{1}{2}]$ the assumptions (A1)--(A4) are fulfilled and the function $G_p$ coincides with $F_{\beta,p}$ since both functions are continuous distribution functions satisfying the functional equation \eqref{dfstatsol}; see Lemma \ref{uniqueness}.


\begin{thebibliography}{99}

\bibitem{AY} Alexander, J.C.; and Yorke, J.A. (1984) Fat baker's transformations. {\it Ergodic Theory Dynam. Systems} {\bf 4} 1--23.

\bibitem{BSS} B\'ar\'any, B.; Simon, K.; and Solomyak, B. (2023) {\it Self-similar and Self-affine Sets and Measures.} Amer. Math. Soc., Providence.

\bibitem{BDGPS} Bertin, M.J.; Decomps-Guilloux, A.; Grandet-Hugot, M.; Pathiaux-Delefosse, M.; and Shreiber, J.P. (1992) {\it Pisot and Salem Numbers.} Birkh\"auser, Basel.

\bibitem{Bovier} Bovier, A. (1996) Bernoulli convolutions, dynamical systems and automata. {\it Disordered Systems (Temuco 1991/1992). Travaux en Cours} {\bf 53}, Herman, Paris, pp. 63--86.

\bibitem{BG} Boyarski, A.; and G\'ora, P. (1997) {\it Laws of Chaos: Invariant Measures and Dynamical Systems in One Dimension.} Birkh\"auser, Boston.

\bibitem{BroDav} Brockwell, P.J.; and Davis, R.A. (2006) {\it Time Series: Theory and Methods.} Second Edition, Springer, New York.

\bibitem{BDM} Buraczewski, D.; Damek, E.; and Mikosch, T. (2016) {\it Stochastic Models with Power-Law Tails -- The Equation $X=AX+B$.} Springer, Cham.

\bibitem{CHT} Canto e Castro, L.; de Haan, L,; and Temido, M.G. (2001) Rarely observed sample maxima. {\it Theory Probab. Appl.} {\bf 45} 658--661.

\bibitem{Cher} Chernick, M.R. (1981) A limit theorem for the maximum of autoregressive processes with uniform marginal distribution. {\it Ann. Probab.} {\bf 9} 145--149.

\bibitem{DiaFre} Diaconis, P.; and Freedman, D. (1999) Iterated random functions. {\it SIAM Review} {\bf 41}(1) 45--76.

\bibitem{Erdoes} Erd\H{o}s, P. (1939) On a family of symmetric Bernoulli convolutions. {\it Amer. J. Math.} {\bf 61} 974-975.

\bibitem{Gal} Galambos, J. (1978) {\it The Asymptotic Theory of Extreme Order Statistics.} Wiley, New York.

\bibitem{Gar} Garsia, A.M. (1962) Arithmetic properties of Bernoulli convolutions. {\it Trans. Amer. Math. Soc.} {\bf 102} 409--432.

\bibitem{GMS} Glava\v{s}, L.; Mladenovi\'c, P.; and Samorodnitsky, G. (2017) Extreme values of the uniform order 1 autoregressive processes and missing observations. {\it Extremes} {\bf 20} 671--690.

\bibitem{GolMal} Goldie, C.M.; and Maller, R.A. (2000) Stability of perpetuities with thin tails. {\it Ann. Probab} {\bf 28} 1196--1218.

\bibitem{Gri1} Grinevich, I.V. (1992) Max-semistable limit distributions corresponding to linear and power normalizations. {\it Theory Probab. Appl.} {\bf 37} 720--721.

\bibitem{Gri2} Grinevich, I.V. (1993) Domains of attraction of the max-semistable laws under linear and power normalizations. {\it Theory Probab. Appl.} {\bf 38} 640--650.

\bibitem{HF} de Haan, L.; and Ferreira, A. (2006) {\it Extreme Value Theory.} Springer, New York.

\bibitem{HolSte} Holland, M.P.; and Sterk, A.E. (2021) On max-semistable laws and extremes for dynamical systems. {\it Entropy} {\bf 23} 1192.

\bibitem{JesWin} Jessen, B.; and Wintner, A. (1935) Distribution functions and the Riemann zeta function. {\it Trans. Amer. Math. Soc.} {\bf 38} 48--88.

\bibitem{KemPer} Kempton, T.; and Persson, T. (2015) Bernoulli convolutions and 1D dynamics. {\it Nonlinearity} {\bf 28} 3921--3934.

\bibitem{KerWin} Kershner, R.; and Wintner. A. (1935) On symmetric Bernoulli convolutions. {\it Amer. J. Math.} {\bf 57} 541--548.

\bibitem{LLR} Leadbetter, M.R.; Lindgren, G; and Rootz\'en, H. (1983) {\it Extremes and Related Properties of Random Sequences and Processes.} Springer, New York.

\bibitem{Led} Ledrappier, F. (1992) On the dimension of some graphs. In: Symbolic dynamics and its applications (New Haven 1991). {\it Contemp. Math.} {\bf 135}. Amer Math. Soc., Providence, pp. 285--293.

\bibitem{LFFFHKNTV} Lucarini, V.; Faranda, D.; Freitas, A.C.M.; Freitas, J.M.; Holland, M.; Kuna, T.; Nicol, M.; Todd, M.; and Vaienti, S. (2016) {\it Extremes and Recurrence in Dynamical Systems.} Wiley, Hoboken, New Jersey.

\bibitem{MS} Mitrea, C.; and Sterk, A.E. (2025) Singular functions obtained via random function iteration. Accepted for publication in {\it Involve, a Journal of Mathematics}.

\bibitem{Megyesi} Megyesi, Z. (2002) Domains of geometric partial attraction of max-semistable laws: Structure, merge and almost sure limit theorems. {\it J. Theoret. Probab.} {\bf 15}(4) 973--1005.

\bibitem{MFS} Moloney, N.R.; Faranda, D.; and Sato, Y. (2019) An overview of the extremal index. {\it Chaos} {\bf 29} 022101.

\bibitem{Pan} Pancheva, E. (2010) Max-semistability: a survey. {\it ProbStat Forum} {\bf 3} 11-24.

\bibitem{PerSch} Peres, Y.; and Schlag, W. (2000) Smoothness of projections, Bernoulli convolutions, and the dimension of exceptions. {\it Duke Math. J.} {\bf 102}(2) 193--251.

\bibitem{PSS} Peres, Y.; Schlag, W.; and Solomyak, B. (2000) Sixty years of Bernoulli convolutions. In: Bandt, C. et al. (Eds.) {\it Fractal Geometry and Stochastics II.} Progress in Probability {\bf 46}, Birkh\"auser, Basel, pp. 39--65.

\bibitem{PerSol1} Peres, Y.; and Solomyak, B. (1996) Absolute continuity of Bernoulli convolutions, a simple proof. {\it Math. Res. Lett.} {\bf 3} (2) 231-239.

 \bibitem{PerSol2} Peres, Y.; and Solomyak, B. (1998) Self-similar measures and intersections of Cantor sets. {\it Trans. Amer. Math. Soc.} {\bf 350}(10) 4065-4087.

\bibitem{PU} Przytycki, F.; and Urba\'nski, M. (1989) On the Hausdorff dimension of some fractal sets. {\it Studia Math.} {\bf 93} 155-186.

\bibitem{Res} Resnick, S.I. (1987) {\it Extreme Values, Regular Variation, and Point Processes.} Springer, New York.

\bibitem{Sol1} Solomyak, B. (1995) On the random series $\sum\pm\lambda^{i}$ (an Erd\H{o}s problem). {\it Ann. Math.} {\bf 142} 611--625.

\bibitem{Sol2} Solomyak, B. (2004) Notes on Bernoulli convolutions. In: Lapidus, M.L. et al. (Eds.) {\it Fractal Geometry and Applications: A Jubilee of Beno\^{i}t Mandelbrot.} Proceedings of Symposia in Pure Mathematics {\bf 72} Part 1, AMS, Providence, pp. 207--232.

\bibitem{Sterk} Sterk, A.E. (2025) Max-semistable extreme value laws for autoregressive processes with Cantor-like marginals. {\it Extremes} {\bf 28} 371--391.

\bibitem{TemCan} Temido, M.G.; and Canto e Castro, L. (2002) Max-semistable laws in extremes of stationary random sequences. {\it Theory Probab. Appl.} {\bf 47} 365--374.

\bibitem{Varju} Varj\'u, P.P. (2016) {\it Recent Progress on Bernoulli Convolutions.} Apollo - University of Cambridge Repository. {\tt https://doi.org/10.17863/CAM.17352}

\bibitem{Varju2} Varj\'u, P.P. (2019) Absolute continuity of Bernoulli convolutions for algebraic parameters. {\it J. Amer. Math. Soc.} {\bf 32} 351--397.

\bibitem{Win} Wintner, A. (1935) On convergent Poisson convolutions. {\it Amer. J. Math.} {\bf 57} 827--838.

\bibitem{Yu} Yu, H. (2022) Bernoulli convolutions with Garsia parameters in $(1,\sqrt{2}]$ have continuous density functions. {\it Proc. Amer. Math. Soc.} {\bf 150} 4359--4368.

\end{thebibliography}
\end{document}